\newcommand{\Hidden}[1]{}
\newcommand{\F}{\mathcal{F}}
\newtheorem{theorem}{Theorem}
\newtheorem{lemma}[theorem]{Lemma}
\newtheorem{corollary}[theorem]{Corollary}
\theoremstyle{definition}
\newtheorem{remark}[theorem]{Remark}
\newtheorem{definition}[theorem]{Definition}
\newtheorem{example}[theorem]{Example}
\theoremstyle{remark}
\begin{document}
\begin{frontmatter}
\title{Spanning 2-Forests and Resistance Distance in 2-Connected Graphs}
% Input author, affiliation, address and support information as follows;
% The address should include the country, but does not have to include
%    the street address. Give at least one email address.

\author[1]{Wayne Barrett \fnref{fn1}}
\ead{wb@mathematics.byu.edu}

\author[1]{Emily J. Evans\fnref{fn1}}
\ead{ejevans@mathematics.byu.edu}
\author[2]{Amanda E. Francis\fnref{fn1}}
\ead{aefr@umich.edu}
\author[1]{Mark Kempton} 
\ead{mkempton@mathematics.byu.edu}
\author[1]{John Sinkovic}
\ead{sinkovic@mathematics.byu.edu}

\address[1]{Department of Mathematics, Brigham Young University, Provo, UT 84602, USA}
\address[2]{Mathematical Reviews, American Mathematical Society, Ann Arbor, MI 48103, USA}

%\address[wayne,emily,mark,john]{Department of Mathematics, Brigham Young University, Provo, UT 84602, USA, \\ 
%\{wb,ejevans,mkempton, sinkovic\}@mathematics.byu.edu}

%\address[amanda]{Mathematical Reviews, American Mathematical Society, Ann Arbor, MI 48103, USA, aefr@umich.edu}

\fntext[fn1]{Supported by the Defense Threat Reduction Agency -- Grant Number HDTRA1-15-1-0049.} 

\begin{abstract}{A spanning 2-forest separating vertices $u$ and $v$  of an undirected connected graph is  a spanning forest with 2 components such that $u$ and $v$ are in distinct components.  Aside from their combinatorial significance, spanning 2-forests have an important application to the calculation of resistance distance or effective resistance.  The resistance distance between vertices $u$ and $v$ in a graph representing an electrical circuit with unit resistance on each edge is the number of spanning 2-forests separating $u$ and $v$ divided by the number of spanning trees in the graph. There are also well-known matrix theoretic methods for calculating resistance distance, but the way in which the structure of the underlying graph determines resistance distance via these methods is not well understood.  

For any connected graph $G$ with a 2-separator separating vertices $u$ and $v$, we show that the number of spanning trees and spanning 2-forests separating $u$ and $v$ can be expressed in terms of these same quantities for the smaller separated graphs, which makes computation significantly more tractable. An important special case is the preservation of the number of spanning 2-forests if $u$ and $v$ are in the same smaller graph. In this paper we demonstrate that this method of calculating resistance distance is more suitable for certain structured families of graphs than the more standard methods. We apply our results to count the number of spanning 2-forests and calculate the resistance distance in a family of Sierpinski triangles and in the family of linear 2-trees with a single bend. 

}
\end{abstract}

\begin{keyword}
spanning 2--forest, 2--connected graph, 2--separator, 2--tree, 2--path, effective resistance, resistance distance\\

MSC 2010: 05C12, 05C05, 94C15

\end{keyword}

\end{frontmatter}

\section{Introduction}

Resistance distance in graphs has played a prominent role not only in circuit theory and chemistry~\cite{bapatdvi,mgt,doylesnell,KleinRandic,oldbook}, but also in combinatorial matrix theory~\cite{Bapatbook,YangKlein} and spectral graph theory~\cite{bapatdvi,mgt,chenzhang,SpielSparse}.  Many of the methods for calculating resistance distance, e.g., those making use of the  Laplacian matrix  are $O(n^3)$ where $n$ is the number of vertices of the graph. Furthermore, the relationship between these resistance distances and the structure of the underlying graph is not well understood except in special cases.   An under-utilized method of calculating the resistance distance between two vertices $u$ and $v$ in a graph $G$ is by determining the number of spanning 2-forests separating $u$ and $v$ in $G$ and the number of spanning trees of $G$ (see Definition~\ref{def:2forests} and Theorem~\ref{thm:2forests/trees}). Thus, if the number of spanning trees is known, calculating the number of spanning 2-forests and resistance distance are equivalent problems.  This work presents new reduction formulas for determining these quantities for 2-connected graphs.   We apply these results to a new family of linear 2-trees generalizing the work of~\cite{bef}.  We begin with the following notation and  definitions.

Let $G$ be an undirected graph in which multiple edges are allowed but loops are not.  Let $V(G)$ denote the vertex set of $G$ and unless otherwise specified $V(G)=\{1,2,\dots,n\}$. Finally, let $T(G)$ denote the number of spanning trees of $G$.  

\begin{definition}\label{def:2forests}
Given any two vertices $u$ and $v$ of $G$, a spanning 2-forest separating $u$ and $v$ is a spanning forest with two components such that $u$ and $v$ are in distinct components. The number of such forests is denoted by  $\F_G(u,v)$.  In addition, we occasionally consider spanning 2-forests separating a vertex $u$ from a pair of vertices $v$ and $w$.  We denote the number of these by $\F_G(u,\{v,w\})$.    
\end{definition}

It follows from the matrix tree theorem~\cite[p. 5]{spectraofgraphs} that for any $j \in \{1,2, \ldots, n\}$, $T(G)=\det L_G(j)$ where $L_G$ is the combinatorial Laplacian matrix of $G$, and $L_G(j)$ is the matrix obtained from $L_G$ by deleting the jth row and column.
The following identity (see \cite{chaik} and Th. 4 of \cite{bapatdvi}) is a relative of the matrix tree theorem: 
\[\F_G(u,v)=\det L_G(u,v),\]
where $L_G(u,v)$ is the matrix obtained from $L_G$ by deleting rows $u,v$ and columns $u,v$.  

If $G$ has a cut-vertex $w$ and $G=G_1 \cup G_2$ with $G_1$ and $G_2$ connected and $V(G_1) \cap V(G_2) = \{w\}$, then it is evident that 
\begin{align}
    T(G)&=T(G_1)T(G_2)\label{eq:cutvtx1}\\
     \F_G(u,v)&=\F_{G_1}(u,v)T(G_2) {\rm \ for\ } u,v \in V(G_1)\label{eq:cutvtx2}\\
     \F_G(u,v)&=\F_{G_1}(u,w)T(G_2)+T(G_1)\F_{G_2}(w,v) {\rm \ for \ } u \in V(G_1) {\rm \ and \ } v \in V(G_2).\label{eq:cutvtx3}
    \end{align}

It is natural to ask if reduction formulae such as these can be found for graphs with no cut vertex, and the answer is in the affirmative if the graph has a cut-set of size 2.  For any graph $G$ with a 2-separator $\{i,j\}$ and associated decomposition $G = G_1 \cup G_2$, one can express $T(G)$ in terms of $T(G_k)$ and $\F_{G_k}(i,j), k=1,2$, and, furthermore, for any pair of vertices $u,v \in V(G)$ one can express $\F_G(u,v)$ in terms of $T(G_k), \F_{G_k}(x,y)$,  and $\F_{G/ij}(x,ij)$ for $k \in \{1,2\}$, $x \in \{u,v\}$ and $y \in \{i,j\}$. Here $G/ij$ denotes the graph obtained by identifying vertices $i$ and $j$ (see Definition~\ref{def:identify}). This is Theorem \ref{thm:formulation1} and is one of the main results of the next section.  
%It is necessarily much more involved than for the case of a cut vertex, but again the reduction is genuine.  
This reduction is particularly effective if the sizes of $G_1$ and $G_2$ are comparable, and if there are multiple 2-separators.  

As previously mentioned we also consider the important and closely related concept of resistance distance or effective resistance.  Consider $G$ as an electric circuit with unit  resistance on each edge, and suppose one unit of current flows into vertex $i$ and one unit of current flows out of vertex $j$.  Then the resistance distance $r_G(u,v)$ between vertices $u$ and $v$ is the ``effective" resistance between $u$ and $v$.  Alternatively, one can give a mathematical formulation
\begin{equation*}
r_G(i,j) = (\mathbf{e}_i - \mathbf{e}_j)^T L_G^\dagger (\mathbf{e}_i - \mathbf{e}_j),
\end{equation*}
where $\dagger$ denotes the Moore-Penrose inverse.  
The following theorem~\cite[Th. 4 and (5)]{bapatdvi} gives the relationship between the resistance distance between $u$ and $v$ and the number of spanning 2-forests separating $u$ and $v$.

\begin{theorem}\label{thm:2forests/trees} Given a graph $G$, the resistance distance between vertices $u$ and $v$ is given by $$r_G(u,v)=\dfrac{\F_G(u,v)}{T(G)}.$$
\end{theorem}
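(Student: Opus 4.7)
The plan is to derive the identity directly from the three ingredients already recorded in the excerpt: the pseudoinverse formula $r_G(u,v) = (\mathbf{e}_u-\mathbf{e}_v)^T L_G^\dagger (\mathbf{e}_u-\mathbf{e}_v)$, the matrix-tree identity $T(G)=\det L_G(j)$, and its companion $\F_G(u,v)=\det L_G(u,v)$. The bridge between them will be Cramer's rule applied to a principal subsystem of $L_G$.

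First I would convert the pseudoinverse expression into an honest linear system. Since $G$ is connected, $\ker L_G = \operatorname{span}\{\mathbf{1}\}$; as $\mathbf{1}^T(\mathbf{e}_u-\mathbf{e}_v)=0$, the system $L_G x = \mathbf{e}_u-\mathbf{e}_v$ is consistent, and any two of its solutions differ by a multiple of $\mathbf{1}$. Consequently the scalar $x_u - x_v$ is the same for every solution, and taking the particular solution $x = L_G^\dagger(\mathbf{e}_u-\mathbf{e}_v)$ identifies this common value with $r_G(u,v)$. Thus $r_G(u,v) = x_u - x_v$ for \emph{any} $x$ satisfying $L_G x = \mathbf{e}_u-\mathbf{e}_v$.

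Next, after relabeling if necessary so that $v$ is the last vertex, I would select the particular solution with $x_v=0$. Deleting the $v$-th equation yields the reduced system $L_G(v)\,\tilde{x} = \tilde{\mathbf{e}}_u$, where $\tilde{x}$ is $x$ with its $v$-th entry removed and $\tilde{\mathbf{e}}_u \in \R^{n-1}$ is the standard basis vector with a $1$ in position $u$. The deleted equation is automatically satisfied because the rows of $L_G$ sum to zero, and $L_G(v)$ is invertible since $\det L_G(v)=T(G)>0$. Cramer's rule then gives $\tilde{x}_u = \det M / \det L_G(v)$, where $M$ is $L_G(v)$ with its $u$-th column replaced by $\tilde{\mathbf{e}}_u$; expanding along that column leaves only the $(u,u)$-cofactor of $L_G(v)$, which equals $\det L_G(u,v) = \F_G(u,v)$, and we conclude $r_G(u,v) = \F_G(u,v)/T(G)$. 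The one place requiring care is the sign bookkeeping in the cofactor expansion, which comes out cleanly after the WLOG choice $v=n$ (the cofactor sign is $(-1)^{2u}=+1$); I do not foresee any more serious obstacle.
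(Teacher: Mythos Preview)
Your argument is correct: passing from the pseudoinverse formulation to an arbitrary solution of $L_G x=\mathbf{e}_u-\mathbf{e}_v$, normalizing $x_v=0$, reducing to the nonsingular subsystem $L_G(v)\tilde x=\tilde{\mathbf e}_u$, and reading off $\tilde x_u$ by Cramer's rule cleanly yields $\det L_G(u,v)/\det L_G(v)=\F_G(u,v)/T(G)$. The sign and the redundancy of the deleted equation are handled correctly.

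As for comparison: the paper does not actually prove this theorem. It is stated with a citation to Bapat~[Th.~4 and (5)], so there is no in-paper argument to compare against. Your derivation is essentially the standard one found in that reference---expressing effective resistance via a principal minor ratio of the Laplacian and then invoking the two matrix-tree identities $T(G)=\det L_G(v)$ and $\F_G(u,v)=\det L_G(u,v)$---so you have supplied exactly the kind of proof the paper defers to the literature.
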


Returning to the case where $G$ has a cut-vertex $w$ as described on the previous page, we divide \eqref{eq:cutvtx2} by $T(G_1)T(G_2)$ and applying Theorem \ref{thm:2forests/trees} we obtain $r_G(u,v)=r_{G_1}(u,v)$, a much shorter proof than the one given of the same result, Theorem 2.5 (Cut Vertex Theorem) in \cite{bef}.  Dividing (3) by $T(G_1)T(G_2)$ we see that if $w$ is a cut vertex of $G$ and $u$ and $v$ lie in distinct components of $G-w$, then
\begin{equation}\label{eq:cutvtxres}
    r_G(u,v) = r_{G_1}(u,w)+r_{G_2}(v,w).
\end{equation}

Aside from (\ref{eq:cutvtxres}), there seem to be few applications of Theorem \ref{thm:2forests/trees} to the calculation of resistance distance.  One significant example is the proof of the second statement of Theorem 7 in \cite{BapatWheels}. Our reduction formulae open the possibility of finding closed forms for resistance distances in many additional graphs.  We illustrate this for the Sierpinski triangle and the family of linear 2-trees with a single bend in Section 3  (see Figures \ref{fig:Sn_building} and \ref{fig:bent}). 

\begin{definition} A linear 2-tree (or 2-path) on $n$ vertices is a graph $G$ satisfying the following 4 properties.
\begin{itemize}
    \item $G$ has $2n-3$ edges.
    \item $K_4$ is not a subgraph of $G$.
    \item $G$ is chordal (every induced cycle is a triangle).
    \item $G$ has two degree two vertices.
    \end{itemize}
\end{definition}

Alternatively, a linear 2-tree is a graph $G$ that is constructed inductively by starting with a triangle and connecting each new vertex to the vertices of an existing edge that includes a vertex of degree 2.

\begin{definition}[straight linear 2-tree]\label{def:lin2treest}
A straight linear 2-tree is a graph $G_n$ with $n$ vertices with adjacency matrix that is symmetric, banded, with the first and second subdiagonals equal to one, the first and second superdiagonals equal to one, and all other entries equal to zero. See Figure~\ref{fig:2tree}. \end{definition} 
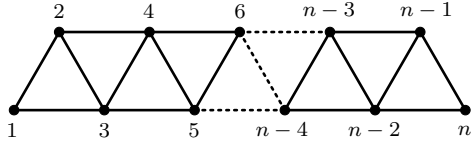
\begin{figure}[!ht]
\begin{center}

\begin{tikzpicture}[line cap=round,line join=round,>=triangle 45,x=1.0cm,y=1.0cm,scale = 1.2]
\draw [line width=1.pt] (-3.,0.)-- (-2.,0.);
\draw [line width=1.pt] (-2.,0.)-- (-1.,0.);
\draw [line width=1.pt,dotted] (-1.,0.)-- (0.,0.);
\draw [line width=1.pt] (0.,0.)-- (1.,0.);
\draw [line width=1.pt] (1.,0.)-- (2.,0.);
\draw [line width=1.pt] (2.,0.)-- (1.5,0.866025403784435);
\draw [line width=1.pt] (1.5,0.866025403784435)-- (1.,0.);
\draw [line width=1.pt] (1.,0.)-- (0.5,0.8660254037844366);
\draw [line width=1.pt] (0.5,0.8660254037844366)-- (0.,0.);
\draw [line width=1.pt,dotted] (0.,0.)-- (-0.5,0.8660254037844378);
\draw [line width=1.pt] (-0.5,0.8660254037844378)-- (-1.,0.);
\draw [line width=1.pt] (-1.,0.)-- (-1.5,0.8660254037844385);
\draw [line width=1.pt] (-1.5,0.8660254037844385)-- (-2.,0.);
\draw [line width=1.pt] (-2.,0.)-- (-2.5,0.8660254037844388);
\draw [line width=1.pt] (-2.5,0.8660254037844388)-- (-3.,0.);
\draw [line width=1.pt] (-2.5,0.8660254037844388)-- (-1.5,0.8660254037844385);
\draw [line width=1.pt] (-1.5,0.8660254037844385)-- (-0.5,0.8660254037844378);
\draw [line width=1.pt,dotted] (-0.5,0.8660254037844378)-- (0.5,0.8660254037844366);
\draw [line width=1.pt] (0.5,0.8660254037844366)-- (1.5,0.866025403784435);
\begin{scriptsize}
\draw [fill=black] (-3.,0.) circle (1.5pt);
\draw[color=black] (-3.02279181666165,-0.22431183338253265) node {$1$};
\draw [fill=black] (-2.,0.) circle (1.5pt);
\draw[color=black] (-2.0001954862580344,-0.22395896857501957) node {$3$};
\draw [fill=black] (-2.5,0.8660254037844388) circle (1.5pt);
\draw[color=black] (-2.5018465162673555,1.100526386601008717) node {$2$};
\draw [fill=black] (-1.5,0.8660254037844385) circle (1.5pt);
\draw[color=black] (-1.5081915914412003,1.100526386601008717) node {$4$};
\draw [fill=black] (-1.,0.) circle (1.5pt);
\draw[color=black] (-1.0065405614318794,-0.22290037415248035) node {$5$};
\draw [fill=black] (-0.5,0.8660254037844378) circle (1.5pt);
\draw[color=black] (-0.4952423962300715,1.100526386601008717) node {$6$};
\draw [fill=black] (0.,0.) circle (1.5pt);
\draw[color=black] (-0.03217990699069834,-0.22431183338253265) node {$n-4$};
\draw [fill=black] (0.5,0.8660254037844366) circle (1.5pt);
\draw[color=black] (0.4887653934035965,1.103344389715898) node {$n-3$};
\draw [fill=black] (1.,0.) circle (1.5pt);
\draw[color=black] (0.9904164234129174,-0.22431183338253265) node {$n-2$};
\draw [fill=black] (1.5,0.866025403784435) circle (1.5pt);
\draw[color=black] (1.5692445349621338,1.1042991524908385) node {$n-1$};
\draw [fill=black] (2.,0.) circle (1.5pt);
\draw[color=black] (1.993718483431559,-0.22431183338253265) node {$n$};
\end{scriptsize}
\end{tikzpicture}
%\begin{tikzpicture}[auto,node distance=1.5cm, thick,main node/.style={circle,draw, fill=red!10!}]
%\node[main node] (1) [] {n};
%\node[main node] (2) [above left of=1] {n-1};
%\node[main node] (3) [below left of=2] {n-2};
%\node[main node] (4) [above left of=3] {n-3};
%\node[main node] (5) [below left of=4] {n-4};
%\node[main node] (6) [above left of=5] {6};
%\node[main node] (7) [below left of=6] {5};
%\node[main node] (8) [above left of=7] {4};
%\node[main node] (9) [below left of=8] {3};
%\node[main node] (10) [above left of=9] {2};
%\node[main node] (11) [below left of=10] {1};
%\foreach \s/\t in {2/1, 2/3, 3/1, 2/4, 3/5,3/4, 4/5, 6/7, 6/8, 7/8, 7/9, 8/9, 8/10, 9/10, 10/11, 9/11} {\path[draw] (\s) edge (\t);}  
%\foreach \s/\t in {4/6,5/7,5/6} {\path[draw,dotted] (\s) edge (\t);}  
%
%
%\end{tikzpicture}\\
\end{center}
\caption{A straight linear 2-tree}
\label{fig:2tree}
\end{figure}

In~\cite{bef} the authors obtained an explicit formula for the resistance distance between any two vertices in a straight linear 2-tree on $n$ vertices, and verified that the number of spanning trees of a straight linear 2-tree is $F_{2n-2}$, where $F_k$ is the kth Fibonacci number. Consequently, the number of spanning 2-forests separating two vertices can be found immediately from Theorem \ref{thm:2forests/trees}.  

A linear 2-tree with a single bend can be obtained from two straight linear 2-trees.  This fact and Theorem \ref{thm:formulation1} are applied in Section 3 to obtain an explicit formula for all resistance distances (all separating 2-forests) in the family of linear 2-trees with a single bend. If $u$ and $v$ are the end vertices of this ``bent" linear 2-tree on $n$ vertices, the number of spanning 2-forests separating $u$ and $v$ is less than the the number in the straight linear 2-tree by the product of four Fibonacci numbers. (See Corollary \ref{cor:27})

%After proving some preliminary identities in Section 4, we examine the resistance distances between end vertices in terms of the location of the bend and show it is minimized when the bend is as close to one end as possible. 

\section{2-Separations}

%We begin by defining a 2-separation for a 2-connected graph $G$.
\begin{definition} A \emph{2-separation} of a graph $G$ is a pair of subgraphs $G_1,G_2$ such that 
\begin{itemize}
    \item $V(G) = V(G_1)\cup V(G_2)$,
    \item $|V(G_1)\cap V(G_2)|=2$, 
    \item $E(G)=E(G_1)\cup E(G_2)$, and 
    \item $E(G_1)\cap E(G_2)=\emptyset$.  
\end{itemize}
The pair of vertices, $V(G_1)\cap V(G_2)$, is called a \emph{2-separator} of $G$.

\end{definition}
    
Throughout this section, we will let $G$ denote a graph with a 2-separation, $G_1,G_2$ will denote the two graphs of the separation, and we will let $\{i,j\} = V(G_1)\cap V(G_2)$. Note that the graph resulting from the deletion of vertices $i$ and $j$ from $G$ is a disconnected graph. %If $i\sim j$ in $G$, then we will only consider the edge $ij$ to belong to exactly one of $G_1$ and $G_2$. 

\begin{theorem}\label{2sep-tree}
Let $G$ be a graph with a 2-separation as above.  Then
\[
T(G) = T(G_1)\F_{G_2}(i,j) + T(G_2)\F_{G_1}(i,j).
\]
\end{theorem}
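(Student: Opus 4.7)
The plan is to set up a bijection between spanning trees of $G$ and certain pairs $(T_1,T_2)$ of spanning forests of $G_1$ and $G_2$. Given any spanning tree $T$ of $G$, I would decompose it by setting $T_k = T\cap E(G_k)$ for $k=1,2$. Since $E(G)=E(G_1)\cup E(G_2)$ is a disjoint union, we recover $T$ from $(T_1,T_2)$, and since $T$ is acyclic, each $T_k$ is a spanning forest of $G_k$.

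The first key step is a pure edge count. Let $n_k=|V(G_k)|$, so $|V(G)|=n_1+n_2-2$, and let $c_k$ denote the number of connected components of the spanning forest $T_k$ in $G_k$. Then $|T_k|=n_k-c_k$, and since $|T|=n_1+n_2-3$, we obtain
\[
c_1+c_2=3.
\]
So, up to swapping indices, we are in exactly one of the two cases $(c_1,c_2)=(1,2)$ or $(2,1)$.

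The second key step is to characterize, in each case, when the pair $(T_1,T_2)$ recombines to a spanning tree of $G$. Suppose $(c_1,c_2)=(1,2)$, so $T_1$ is already a spanning tree of $G_1$ (in particular $i$ and $j$ are already connected in $T_1$) and $T_2$ is a spanning 2-forest of $G_2$. Because any path in $G$ between $V(G_1)\setminus\{i,j\}$ and $V(G_2)\setminus\{i,j\}$ must traverse $\{i,j\}$, the graph $T_1\cup T_2$ is connected if and only if the two components of $T_2$ are merged in $T_1\cup T_2$, which happens if and only if $i$ and $j$ lie in distinct components of $T_2$, i.e., $T_2$ is a spanning 2-forest of $G_2$ separating $i$ and $j$. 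The case $(c_1,c_2)=(2,1)$ is symmetric. Since we already have the right edge count, connectedness is equivalent to being a spanning tree.

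Putting the two cases together yields the bijection: spanning trees of $G$ correspond either to a pair consisting of a spanning tree of $G_1$ and a spanning 2-forest of $G_2$ separating $i,j$, or to a pair consisting of a spanning 2-forest of $G_1$ separating $i,j$ and a spanning tree of $G_2$. Counting these pairs gives the claimed identity. The only real subtlety to watch is confirming the separation/connection dichotomy in the $(1,2)$ case, but this follows immediately from the fact that $\{i,j\}$ is a vertex cut of $G$; after that the proof is essentially bookkeeping.
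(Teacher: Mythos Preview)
Your proof is correct and follows essentially the same approach as the paper: restrict a spanning tree of $G$ to each side of the separation and argue that exactly one restriction is a spanning tree while the other is a spanning 2-forest separating $i$ and $j$. The only cosmetic difference is that the paper pinpoints which side is which by observing that the unique $i$--$j$ path in $T$ must lie entirely in $G_1$ or entirely in $G_2$, whereas you reach the same dichotomy via the edge count $c_1+c_2=3$; both arguments yield the identical bijection.
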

\begin{proof}
If $T$ is a spanning tree of $G$, then since $\{i,j\}$ separates $G_1$ and $G_2$, the unique path in $T$ connecting $i$ to $j$ must lie either entirely in $G_1$ or entirely in $G_2$.  Then the restriction of $T$ to the other side is a spanning 2-forest of that side separating $i$ and $j$.  By the multiplication principle, the result follows.  
\end{proof}

Before stating the first analogous result for 2-forests, we need the following definition.
\begin{definition}\label{def:identify}
If $i,j$ are vertices of $G$, then $G/ij$ is the graph obtained from $G$ by identifying vertices $i$ and $j$ into a vertex we denote by $ij$.  In the identification, any edge $\{u,i\}$ or $\{u,j\}$ with $u \ne i,j$ is replaced by an edge $\{u,ij\}$. (So if $i$ and $j$ have a common neighbor $v$, then there is a double edge from $v$ to $ij$ in the new graph.)  Any edge $\{u,v\}$ with neither $u$ nor $v$ equal to $i$ or $j$ remains.  If $\{i,j\}$ is an edge it disappears. 
\end{definition}

\begin{theorem}\label{2sep-forest-same-side}
Let $G$ be as above, and let $u,v\in V(G_1)$.  Then
\[
\F_{G}(u,v) = \F_{G_1}(u,v)\F_{G_2}(i,j) + \F_{G_1/ij}(u,v)T(G_2).
\]
\end{theorem}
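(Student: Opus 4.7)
The plan is to mirror the strategy of Theorem \ref{2sep-tree}. I would take an arbitrary spanning 2-forest $F$ of $G$ separating $u$ and $v$, decompose it as $F = F_1 \cup F_2$ with $F_k = F \cap E(G_k)$, and analyze the pair $(F_1, F_2)$. Since $E(G_1)$ and $E(G_2)$ are disjoint and $|V(G)| = |V(G_1)| + |V(G_2)| - 2$, counting edges gives $c(F_1) + c(F_2) = 4$, where $c(\cdot)$ denotes the number of components. Each component of $F$ is a union of components of $F_1$ and $F_2$ glued along $i$ and $j$; in particular, $i$ and $j$ cannot lie in the same component of both $F_1$ and $F_2$, for that would create a cycle in $F$.

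The central step is a case split on the placements of $i, j$ in $F_1, F_2$ subject to $c(F_1) + c(F_2) = 4$. Several cases collapse to zero because whenever all of $V(G_1)$ is absorbed into a single component of $F$, the vertices $u, v$ cannot be separated; this rules out $c(F_1) = 1$ (spanning tree of $G_1$), and also $c(F_1) = c(F_2) = 2$ with $i, j$ in a common component of $F_2$. The two surviving configurations match the two terms on the right. In the first, $c(F_1) = c(F_2) = 2$ with $F_2$ separating $i$ and $j$: here $F_1$ may have $i, j$ in the same or different components, and a direct check shows $F$ separates $u, v$ iff $F_1$ does, contributing $\F_{G_1}(u,v)\F_{G_2}(i,j)$. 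In the second, $c(F_1) = 3$ and $c(F_2) = 1$, so $F_2$ is a spanning tree of $G_2$ and $F_1$ is a spanning 3-forest of $G_1$ with components $C_i \ni i$, $C_j \ni j$, and $C_k$ (the third component) containing exactly one of $u$ and $v$.

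To match the second configuration with $\F_{G_1/ij}(u,v)$, I would set up a bijection to spanning 2-forests of $G_1/ij$ separating $u, v$. Since $F_1$ uses no $\{i,j\}$-edge, it descends to a spanning subgraph $F_1/ij$ of $G_1/ij$ with $n_1 - 3$ edges; the identification of $i$ with $j$ merges $C_i$ and $C_j$ into the single component of $F_1/ij$ containing $ij$, while $C_k$ remains untouched, producing a 2-forest of $G_1/ij$, and the requirement that exactly one of $u, v$ lie in $C_k$ is exactly the requirement that $u$ and $v$ end up in different components of $F_1/ij$. The reverse direction lifts the edges of a 2-forest of $G_1/ij$ back to $G_1$, and the forest condition on the quotient forces $i$ and $j$ into distinct components of the lift. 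Hence the second configuration contributes $\F_{G_1/ij}(u,v)T(G_2)$, and the multiplication principle over the independent choices of $F_1$ and $F_2$ in each configuration yields the claimed identity. The main obstacle is the case bookkeeping --- keeping track of which cross-separator gluings absorb $V(G_1)$ into a single $F$-component and so destroy the separation of $u, v$; once the surviving cases are identified, the quotient bijection for the second term is a direct application of Definition \ref{def:identify}.
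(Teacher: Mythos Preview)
Your proposal is correct and follows essentially the same two-case decomposition as the paper's proof: the paper splits on whether $i$ and $j$ lie in the same component of $F$ restricted to $G_2$, which is exactly your split between $(c(F_1),c(F_2))=(2,2)$ with $F_2$ separating $i,j$ versus $(c(F_1),c(F_2))=(3,1)$. Your explicit edge-count identity $c(F_1)+c(F_2)=4$ and the accompanying elimination of the degenerate cases $(1,3)$ and $(2,2)$-with-$i,j$-together-in-$F_2$ supply rigor that the paper's proof leaves implicit, and your bijection to $G_1/ij$ is spelled out more carefully than the paper's one-line identification, but the underlying argument is the same.
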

\begin{proof}
Let $F$ be a spanning 2-forest in $G$ that separates $u$ and $v$.   

\textbf{Case 1:} $i$ and $j$ belong to different components of $F$ restricted to $G_2$.  Then the restriction of $F$ to $G_1$ has 2 components that separate $u$ and $v$ and the restriction to $G_2$ has two components that separate $i$ and $j$.  By the multiplication principle, the number of ways to do this is $\F_{G_1}(u,v)\F_{G_2}(i,j)$.

\textbf{Case 2:} $i$ and $j$ belong to the same component of $F$ restricted to $G_2$.  Then $i$ and $j$ are in the same component of $F$ and thus in the same component of $F$ as either $u$ or $v$.  Without loss of generality, suppose they are in the same component of $F$ as $u$.  

Since $i$ and $j$ belong to the same component of $F$ restricted to $G_2$, and $i,j$ is a 2-separator for $G$, $F$ restricted to $G_2$ is a spanning tree of $G_2$.  Then the path in $F$ from $i$ to $j$ is in $G_2$ and not in $G_1$, so the restriction of $F$ to $G_1$ has 3 components, separating $i$, $j$, and $v$ (and $u$ will be in a component with either $i$ or $j$).  Then identifying $i$ with $j$, we obtain a spanning 2-forest of $G_1/ij$ that separates $u$ from $v$.  There are $\F_{G_1/ij}(u,v)T(G_2)$ ways of doing this.
\end{proof}

\begin{definition}
If $G$ is a graph with a 2-separator $\{i,j\}$, and corresponding 2-separation $G_1,G_2$, then a \emph{2-switch} is the operation that identifies the copy of $i$ in $G_1$ with the copy of $j$ in $G_2$ and the copy of $j$ in $G_1$ with the copy of $i$ in $G_2$ (see Figure \ref{fig:2_switch}).
\end{definition}
We note that we are following the terminology adopted in \cite{2switch} and remark that the term 2-switch has also been used in a different context with a distinct meaning.

\begin{figure}[h]
    \centering
    \begin{tikzpicture}
    \begin{scope}[scale = .85]
%\usepackage{pgf,tikz}
%\usepackage{mathrsfs}
%\usetikzlibrary{arrows}
%\pagestyle{empty}
\draw [line width=.8pt] (7.5,3.1339745962155607)-- (8.,4.);
\draw [line width=.8pt] (8.,4.)-- (7.,4.);
\draw [line width=.8pt] (7.,4.)-- (7.5,3.1339745962155607);
\draw [line width=.8pt] (7.5,3.1339745962155607)-- (6.5,3.133974596215561);
\draw [line width=.8pt] (6.5,3.133974596215561)-- (7.,4.);
\draw [line width=.8pt] (7.,4.)-- (6.,4.);
\draw [line width=.8pt] (6.,4.)-- (6.5,3.133974596215561);
\draw [line width=.8pt] (6.5,3.133974596215561)-- (5.5,3.133974596215561);
\draw [line width=.8pt] (5.5,3.133974596215561)-- (6.,4.);
\draw [line width=.8pt] (6.,4.)-- (5.,4.);
\draw [line width=.8pt] (5.,4.)-- (5.5,3.133974596215561);
\begin{scriptsize}
\draw [fill=black] (8.,4.) circle (1.6pt);
\draw [fill=black] (7.,4.) circle (1.6pt);
\draw [fill=black] (7.5,3.1339745962155607) circle (1.6pt);
\draw [fill=black] (6.5,3.133974596215561) circle (1.6pt);
\draw[color=black] (6.565892549850077,2.9389718782765817) node {$j$};
\draw [fill=black] (6.,4.) circle (1.6pt);
\draw[color=black] (5.855297516616688,4.257011052822387) node {$i$};
\draw [fill=black] (5.5,3.133974596215561) circle (1.6pt);
\draw [fill=black] (5.,4.) circle (1.6pt);
\end{scriptsize}
\node at (8.7,3.66) {$\to$};

    \end{scope}
%Image2
    \begin{scope}[xshift = .3 \textwidth, scale = .85]
\draw [line width=.8pt] (7.5,3.1339745962155607)-- (8.,4.);
\draw [line width=.8pt] (8.,4.)-- (7.,4.);
\draw [line width=.8pt] (7.,4.)-- (7.5,3.1339745962155607);
\draw [line width=.8pt] (7.5,3.1339745962155607)-- (6.5,3.133974596215561);
\draw [line width=.8pt] (6.5,3.133974596215561)-- (7.,4.);
\draw [line width=.8pt] (7.,4.)-- (6.,4.);
%\draw [line width=.8pt] (6.,4.)-- (6.5,3.133974596215561);
\draw [line width=.8pt] (4.74564686057547,3.126877069979172)-- (5.752146624695713,3.1280818467983713);
\draw [line width=.8pt] (5.752146624695713,3.1280818467983713)-- (5.247853375304275,3.9991338230199465);
\draw [line width=.8pt] (5.247853375304275,3.9991338230199465)-- (4.74564686057547,3.126877069979172);
\draw [line width=.8pt] (4.74564686057547,3.126877069979172)-- (4.241353611184032,3.9979290462007486);
\draw [line width=.8pt] (4.241353611184032,3.9979290462007486)-- (5.247853375304275,3.9991338230199465);
\begin{scriptsize}
\draw [fill=black] (8.,4.) circle (1.6pt);
\draw [fill=black] (7.,4.) circle (1.6pt);
\draw [fill=black] (7.5,3.1339745962155607) circle (1.6pt);
\draw [fill=black] (6.5,3.133974596215561) circle (1.6pt);
\draw[color=black] (6.565892549850077,2.9389718782765817) node {$j$};
\draw [fill=black] (6.,4.) circle (1.6pt);
\draw[color=black] (5.855297516616688,4.257011052822387) node {$i$};
\draw [fill=black] (5.752146624695713,3.1280818467983713) circle (1.6pt);
\draw[color=black] (5.820913885976363,2.973355508916907) node {$j$};
\draw [fill=black] (5.247853375304275,3.9991338230199465) circle (1.6pt);
\draw[color=black] (5.144702483383298,4.257011052822387) node {$i$};
\draw [fill=black] (4.74564686057547,3.126877069979172) circle (1.6pt);
\draw [fill=black] (6.5,3.133974596215561) circle (1.6pt);
\draw [fill=black] (4.241353611184032,3.9979290462007486) circle (1.6pt);
\end{scriptsize}
\node at (8.7,3.66) {$\to$};

    \end{scope}
 %Image 3   
    \begin{scope}[xshift = .57\textwidth, scale = .85]
\draw [line width=.8pt] (7.5,3.1339745962155607)-- (8.,4.);
\draw [line width=.8pt] (8.,4.)-- (7.,4.);
\draw [line width=.8pt] (7.,4.)-- (7.5,3.1339745962155607);
\draw [line width=.8pt] (7.5,3.1339745962155607)-- (6.5,3.133974596215561);
\draw [line width=.8pt] (6.5,3.133974596215561)-- (7.,4.);
\draw [line width=.8pt] (7.,4.)-- (6.,4.);
%\draw [line width=.8pt] (6.,4.)-- (6.5,3.133974596215561);
\draw [line width=.8pt] (4.74564686057547,3.126877069979172)-- (5.752146624695713,3.1280818467983713);
\draw [line width=.8pt] (5.752146624695713,3.1280818467983713)-- (5.247853375304275,3.9991338230199465);
\draw [line width=.8pt] (5.247853375304275,3.9991338230199465)-- (4.74564686057547,3.126877069979172);
\draw [line width=.8pt] (4.74564686057547,3.126877069979172)-- (5.249940109966908,2.255825093757596);
\draw [line width=.8pt] (5.249940109966908,2.255825093757596)-- (5.752146624695713,3.1280818467983713);
\begin{scriptsize}
\draw [fill=black] (8.,4.) circle (1.6pt);
\draw [fill=black] (7.,4.) circle (1.6pt);
\draw [fill=black] (7.5,3.1339745962155607) circle (1.6pt);
\draw [fill=black] (6.5,3.133974596215561) circle (1.6pt);
\draw[color=black] (6.565892549850077,2.9389718782765817) node {$j$};
\draw [fill=black] (6.,4.) circle (1.6pt);
\draw[color=black] (5.855297516616688,4.257011052822387) node {$i$};
\draw [fill=black] (5.752146624695713,3.1280818467983713) circle (1.6pt);
\draw[color=black] (5.820913885976363,2.973355508916907) node {$i$};
\draw [fill=black] (5.247853375304275,3.9991338230199465) circle (1.6pt);
\draw[color=black] (5.144702483383298,4.257011052822387) node {$j$};
\draw [fill=black] (4.74564686057547,3.126877069979172) circle (1.6pt);
\draw [fill=black] (6.5,3.133974596215561) circle (1.6pt);
\draw [fill=black] (5.249940109966908,2.255825093757596) circle (1.6pt);
\end{scriptsize}
\node at (8.7,3.66) {$\to$};

    \end{scope}
    %Image 4
    \begin{scope}[xshift = .8\textwidth, scale = .85]
\draw [line width=.8pt] (7.5,3.1339745962155607)-- (8.,4.);
\draw [line width=.8pt] (8.,4.)-- (7.,4.);
\draw [line width=.8pt] (7.,4.)-- (7.5,3.1339745962155607);
\draw [line width=.8pt] (7.5,3.1339745962155607)-- (6.5,3.133974596215561);
\draw [line width=.8pt] (6.5,3.133974596215561)-- (7.,4.);
\draw [line width=.8pt] (7.,4.)-- (6.,4.);
\draw [line width=.8pt] (6.,4.)-- (6.5,3.133974596215561);
\draw [line width=.8pt] (6.,4.)-- (5.5,3.133974596215561);
\draw [line width=.8pt] (5.5,3.133974596215561)-- (6.5,3.133974596215561);
\draw [line width=.8pt] (6.5,3.133974596215561)-- (6.,2.2679491924311224);
\draw [line width=.8pt] (6.,2.2679491924311224)-- (5.5,3.133974596215561);
\begin{scriptsize}
\draw [fill=black] (8.,4.) circle (1.6pt);
\draw [fill=black] (7.,4.) circle (1.6pt);
\draw [fill=black] (7.5,3.1339745962155607) circle (1.6pt);
\draw [fill=black] (6.5,3.133974596215561) circle (1.6pt);
\draw[color=black] (6.565892549850077,2.9389718782765817) node {$j$};
\draw [fill=black] (6.,4.) circle (1.6pt);
\draw[color=black] (5.855297516616688,4.257011052822387) node {$i$};
\draw [fill=black] (6.5,3.133974596215561) circle (1.6pt);
\draw [fill=black] (5.5,3.133974596215561) circle (1.6pt);
\draw [fill=black] (6.,2.2679491924311224) circle (1.6pt);
\end{scriptsize}
\end{scope}
    \end{tikzpicture}
    \caption{2-switch}
    \label{fig:2_switch}
\end{figure}
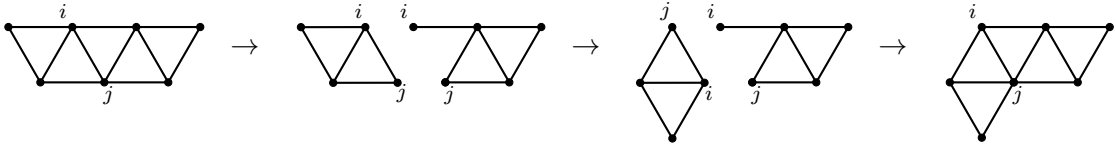

\begin{theorem}\label{thm:2sep2switch}
Let $G$ be a graph with a 2-separator $\{i,j\}$ and $G'$ the graph obtained by performing a 2-switch on $i$ and $j$.  Then $T(G)= T(G')$.  Moreover, if $u$ and $v$ are both in $G_1$ or both in $G_2$, then
\begin{align*}
\F_G(u,v) &= \F_{G'}(u,v) \quad \text{and}\\
r_G(u,v) &= r_{G'}(u,v).
\end{align*}
\end{theorem}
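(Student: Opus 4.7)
My plan is to deduce all three equalities directly from the reduction formulas already established in Theorems \ref{2sep-tree} and \ref{2sep-forest-same-side}, exploiting the obvious symmetry $\F_H(x,y)=\F_H(y,x)$ of the separating 2-forest count in its two arguments.

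The central observation is that the 2-switch does not alter the underlying abstract graphs $G_1$ and $G_2$; it merely changes which vertex of the separator in $G_1$ is identified with which vertex of the separator in $G_2$. Thus one may view $G'$ as having the 2-separation $G_1'\cong G_1$, $G_2'\cong G_2$, where the isomorphisms send the separator vertices $(i,j)$ of $G_1'$ to $(i,j)$ in $G_1$ and the separator vertices $(i,j)$ of $G_2'$ to $(j,i)$ in $G_2$. Every quantity $T(G_k)$, $\F_{G_k}(i,j)$, and $\F_{G_k/ij}(u,v)$ appearing in the reduction formulas is an invariant of the abstract graph $G_k$ (or $G_k/ij$) together with an unordered pair of marked vertices, so it is preserved under this relabeling.

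For $T(G)=T(G')$, I apply Theorem \ref{2sep-tree} to both graphs; the two resulting expressions are identical term by term once we invoke $\F_{G_2}(i,j)=\F_{G_2}(j,i)$. For $\F_G(u,v)=\F_{G'}(u,v)$ with $u,v\in V(G_1)$, I apply Theorem \ref{2sep-forest-same-side} to both graphs; the factor $\F_{G_1}(u,v)$ and the contracted graph $G_1/ij=G_1/ji$ are unaffected by the switch, and the only argument-sensitive factor is $\F_{G_2}(i,j)$, which is symmetric. The case $u,v\in V(G_2)$ is handled symmetrically by interchanging the roles of $G_1$ and $G_2$. Finally, the resistance identity $r_G(u,v)=r_{G'}(u,v)$ is immediate from $T(G)=T(G')$ and $\F_G(u,v)=\F_{G'}(u,v)$ together with Theorem \ref{thm:2forests/trees}.

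The only real obstacle is notational: being precise about which 2-separation of $G'$ is being used, and explaining that the quantities in the reduction formulas depend only on abstract graphs and \emph{unordered} pairs of marked vertices. Once this bookkeeping is in place, the argument collapses to a single invocation of the symmetry $\F_H(x,y)=\F_H(y,x)$, so no nontrivial new combinatorial argument is required.
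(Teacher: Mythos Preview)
Your proposal is correct and follows essentially the same approach as the paper: the paper's proof simply observes that the formulas of Theorems~\ref{2sep-tree} and~\ref{2sep-forest-same-side} depend only on the smaller graphs $G_1$ and $G_2$ and not on how they are glued, and then invokes Theorem~\ref{thm:2forests/trees} for the resistance identity. Your write-up is more explicit about the bookkeeping (the symmetry $\F_H(x,y)=\F_H(y,x)$ and the isomorphism swapping the separator labels in $G_2$), but the underlying argument is identical.
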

\begin{proof}
The first two equalities follow from Theorems \ref{2sep-tree} and \ref{2sep-forest-same-side} respectively, since they depend only on the smaller graphs, not how they are joined together.  The last follows from the first two and Theorem \ref{thm:2forests/trees}. %, $T(G) = T(G')$ and $\F_{u,v}(G) = \F_{u,v}(G')$ since these quantities depend only on $G_1$ and $G_2$, not on how they are joined together. 
\end{proof}

\begin{example}
After performing a 2-switch in the graph on the left in Figure \ref{fig:2_switch_rd}, we obtain the graph on the right.  Each edge label denotes the number of 2-forests separating the vertices incident to that edge.  Note that these are identical in the subgraph $H$ induced on vertices $\{3,4,5,6,7\}$ and (after a relabel) in the subgraph $K$ induced by $\{1,2,3,4\}$ as guaranteed by Theorem \ref{thm:2sep2switch}.  The number of separating spanning 2-forests is also the same for non-adjacent vertices in $H$ and $K$ (though we have not displayed them).
\end{example}

\begin{figure}[h]
\begin{tikzpicture}
\begin{scope}[scale = 1.7]
\draw [line width=.8pt] (7.5,3.1339745962155607)-- (8.,4.);
\draw [line width=.8pt] (8.,4.)-- (7.,4.);
\draw [line width=.8pt] (7.,4.)-- (7.5,3.1339745962155607);
\draw [line width=.8pt] (7.5,3.1339745962155607)-- (6.5,3.133974596215561);
\draw [line width=.8pt] (6.5,3.133974596215561)-- (7.,4.);
\draw [line width=.8pt] (7.,4.)-- (6.,4.);
\draw [line width=.8pt] (6.,4.)-- (6.5,3.133974596215561);
\draw [line width=.8pt] (6.,4.)-- (5.5,3.133974596215561);
\draw [line width=.8pt] (5.5,3.133974596215561)-- (6.5,3.133974596215561);
\draw [line width=.8pt] (6,4)-- (5,4);
\draw [line width=.8pt] (5,4)-- (5.5,3.133974596215561);
\begin{scriptsize}
%\draw [fill=black] (8.,4.) circle (1.6pt);
\draw [color=black, fill = white] (5,4) circle (3.5pt);
\draw[color=black] (5,4) node {$1$};

\draw [color=black, fill = white]  (5.5,3.133974596215561) circle (3.5pt);
\draw[color=black] (5.5,3.133974596215561) node {$2$};

\draw [color=black, fill = white] (6.5,3.133974596215561) circle (3.5pt);
\draw[color=black] (6.5,3.133974596215561) node {$4$};

\draw [color=black, fill = white] (6.,4.) circle (3.5pt);
\draw[color=black] (6,4) node {$3$};

\draw [color=black, fill = white] (7.,4.) circle (3.5pt);
\draw[color=black] (7,4) node {$5$};

\draw [color=black, fill = white] (7.5,3.1339745962155607) circle (3.5pt);
\draw[color=black] (7.5,3.1339745962155607) node {$6$};

\draw[color=black, fill = white] (8,4) circle (3.5pt);
\draw[color=black] (8,4) node {$7$};

\draw[color=white, fill = white] (7.75,3.535) circle (3.5pt);
\draw[color=black] (7.75,3.565) node {89};

\draw[color=white, fill = white] (7.5,4) circle (3.5pt);
\draw[color=black] (7.5,4) node {89};

\draw[color=white, fill = white] (7.25,3.54) circle (3.5pt);
\draw[color=black] (7.25,3.54) node {68};

\draw[color=white, fill = white] (7,3.13) circle (3.5pt);
\draw[color=black] (7.01,3.13) node {81};

\draw[color=white, fill = white] (6.75,3.54) circle (3.5pt);
\draw[color=black] (6.75,3.54) node {65};

\draw[color=white, fill = white] (6.5,4) circle (3.5pt);
\draw[color=black] (6.5,4) node {80};

\draw[color=white, fill = white] (6.25,3.54) circle (3.5pt);
\draw[color=black] (6.25,3.54) node {65};

\draw[color=white, fill = white] (5.79,3.55) circle (3.5pt);
\draw[color=black] (5.79,3.57) node {68};

\draw[color=white, fill = white] (6,3.13) circle (3.5pt);
\draw[color=black] (6,3.13) node {81};

\draw[color=white, fill = white] (5.5,4) circle (3.5pt);
\draw[color=black] (5.5,4) node {89};

\draw[color=white, fill = white] (5.25,3.54) circle (3.5pt);
\draw[color=black] (5.25,3.54) node {89};
\end{scriptsize}
\node at (8.9,3.5) {\Large $\longrightarrow$};
\node at (6,4.5) {};
\end{scope}

\begin{scope}[xshift = .5\textwidth, scale = 1.7]
\draw [line width=.8pt] (7.5,3.1339745962155607)-- (8.,4.);
\draw [line width=.8pt] (8.,4.)-- (7.,4.);
\draw [line width=.8pt] (7.,4.)-- (7.5,3.1339745962155607);
\draw [line width=.8pt] (7.5,3.1339745962155607)-- (6.5,3.133974596215561);
\draw [line width=.8pt] (6.5,3.133974596215561)-- (7.,4.);
\draw [line width=.8pt] (7.,4.)-- (6.,4.);
\draw [line width=.8pt] (6.,4.)-- (6.5,3.133974596215561);
\draw [line width=.8pt] (6.,4.)-- (5.5,3.133974596215561);
\draw [line width=.8pt] (5.5,3.133974596215561)-- (6.5,3.133974596215561);
\draw [line width=.8pt] (6.5,3.133974596215561)-- (6.,2.2679491924311224);
\draw [line width=.8pt] (6.,2.2679491924311224)-- (5.5,3.133974596215561);
\begin{scriptsize}
%\draw [fill=black] (8.,4.) circle (1.6pt);
\draw [color=black, fill = white] (6.,2.2679491924311224) circle (3.5pt);
\draw[color=black] (6.,2.2679491924311224) node {$1$};

\draw [color=black, fill = white]  (5.5,3.133974596215561) circle (3.5pt);
\draw[color=black] (5.5,3.133974596215561) node {$2$};

\draw [color=black, fill = white] (6.5,3.133974596215561) circle (3.5pt);
\draw[color=black] (6.5,3.133974596215561) node {$4$};

\draw [color=black, fill = white] (6.,4.) circle (3.5pt);
\draw[color=black] (6,4) node {$3$};

\draw [color=black, fill = white] (7.,4.) circle (3.5pt);
\draw[color=black] (7,4) node {$5$};

\draw [color=black, fill = white] (7.5,3.1339745962155607) circle (3.5pt);
\draw[color=black] (7.5,3.1339745962155607) node {$6$};

\draw[color=black, fill = white] (8,4) circle (3.5pt);
\draw[color=black] (8,4) node {$7$};

\draw[color=white, fill = white] (7.75,3.535) circle (3.5pt);
\draw[color=black] (7.75,3.565) node {89};

\draw[color=white, fill = white] (7.5,4) circle (3.5pt);
\draw[color=black] (7.5,4) node {89};

\draw[color=white, fill = white] (7.25,3.54) circle (3.5pt);
\draw[color=black] (7.25,3.54) node {68};

\draw[color=white, fill = white] (7,3.13) circle (3.5pt);
\draw[color=black] (7.01,3.13) node {81};

\draw[color=white, fill = white] (6.75,3.54) circle (3.5pt);
\draw[color=black] (6.75,3.54) node {65};

\draw[color=white, fill = white] (6.5,4) circle (3.5pt);
\draw[color=black] (6.5,4) node {80};

\draw[color=white, fill = white] (6.25,3.54) circle (3.5pt);
\draw[color=black] (6.25,3.54) node {65};

\draw[color=white, fill = white] (5.79,3.55) circle (3.5pt);
\draw[color=black] (5.79,3.57) node {81};

\draw[color=white, fill = white] (6,3.13) circle (3.5pt);
\draw[color=black] (6,3.13) node {68};

\draw[color=white, fill = white] (6.25,2.7) circle (3.5pt);
\draw[color=black] (6.25,2.7) node {89};

\draw[color=white, fill = white] (5.75,2.7) circle (3.5pt);
\draw[color=black] (5.75,2.7) node {89};
\end{scriptsize}

\end{scope}
\end{tikzpicture}
    \caption{Each edge label denotes the number of spanning 2-forests separating the vertices incident to the edge}
    \label{fig:2_switch_rd}

\end{figure}
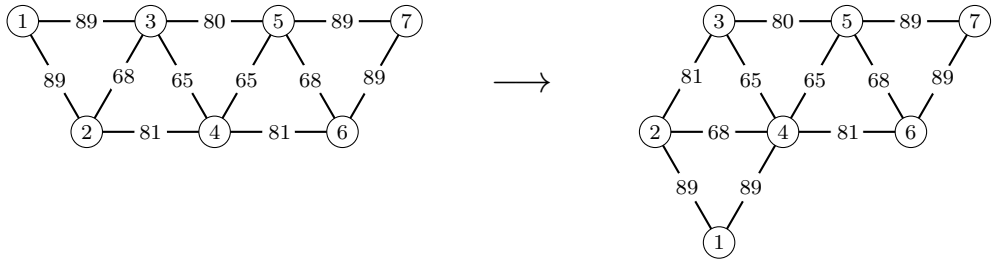

\begin{lemma}\label{obvious}Let $G$ be a graph, and let $x,y,z$ be any vertices of $G$.  Then \[\F_{G}(x,z) = \F_{G}(\{x,y\},z) + \F_{G}(x,\{y,z\}). \]
\end{lemma}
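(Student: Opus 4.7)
The approach is a direct combinatorial partitioning argument. The plan is to classify each spanning 2-forest of $G$ that separates $x$ from $z$ according to which of the two components contains the third vertex $y$, and observe that the resulting cases are exactly the two terms on the right-hand side.

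More precisely, I would first fix an arbitrary spanning 2-forest $F$ of $G$ counted by $\F_{G}(x,z)$, so that $F$ has two components, one containing $x$ and the other containing $z$. Since $V(G)$ is the disjoint union of the two vertex sets of these components, the vertex $y$ belongs to exactly one of them. Define two sets $A$ and $B$ of spanning 2-forests: $A$ consists of those $F$ in which $y$ lies in the same component as $x$, and $B$ consists of those in which $y$ lies in the same component as $z$. Clearly $A$ and $B$ are disjoint and together exhaust the spanning 2-forests separating $x$ from $z$.

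Next I would identify $|A|$ with $\F_{G}(\{x,y\},z)$ and $|B|$ with $\F_{G}(x,\{y,z\})$. A spanning 2-forest lies in $A$ precisely when $x$ and $y$ are in one component and $z$ is in the other, which is exactly the condition that it is a spanning 2-forest separating $\{x,y\}$ from $z$; symmetrically for $B$. Summing, one gets
\[
\F_{G}(x,z) = |A| + |B| = \F_{G}(\{x,y\},z) + \F_{G}(x,\{y,z\}),
\]
as claimed.

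There is essentially no obstacle: the argument is a one-line case split based on the location of $y$, and the only point that requires even a moment's care is checking that the cases exhaust and do not overlap. Degenerate coincidences among $x,y,z$ (for instance $y=x$, which forces the ``$x$ separated from $\{y,z\}$'' count to be zero) are handled automatically by the definitions, so no separate treatment is needed.
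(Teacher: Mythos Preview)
Your proposal is correct and is essentially the same argument as the paper's: the paper's proof is the one-line observation that in any spanning 2-forest separating $x$ from $z$, the vertex $y$ must lie either in the component of $x$ or in the component of $z$. You have simply spelled this partition out in more detail.
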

\begin{proof} In any spanning 2-forest that separates $x$ from $z$, the vertex $y$ must be either in the same component as $x$ or in the same component as $z$.
\end{proof}

\begin{theorem}\label{thm:formulation1}
Let $G$ be a graph with a 2-separation, with $i,j$ the two vertices separating the graph, and $G_1, G_2$ the two graphs of the separation.  Let $u\in V(G_1)$ and $v\in V(G_2)$.  Then
\begin{align*}
\F_G(u,v) =& \F_{G_1/ij}(u,ij)T(G_2) +\F_{G_2/ij}(v,ij)T(G_1)\\& + \F_{G_1}(u,i)\F_{G_2}(v,j)+\F_{G_1}(u,j)\F_{G_2}(v,i) \\& - 2\F_{G_1}(u,\{i,j\})\F_{G_2}(v,\{i,j\}).
\end{align*}
\end{theorem}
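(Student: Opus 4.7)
My plan is to analyze a spanning 2-forest $F$ of $G$ separating $u$ and $v$ by looking at its restrictions $F_1 = F \cap G_1$ and $F_2 = F \cap G_2$, each a spanning forest of the corresponding side, and classifying according to the pair $(c(F_1), c(F_2))$ of component counts. Since $F_1$ and $F_2$ share exactly the two vertices $i,j$ and are edge-disjoint, the identity $|V(H)| - |E(H)| = c(H)$ for forests applied to $F$, $F_1$, and $F_2$ yields $c(F_1) + c(F_2) = c(F) + 2 = 4$. So only the pairs $(1,3)$, $(2,2)$, and $(3,1)$ can occur.

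For $(1,3)$, $F_1$ is a spanning tree of $G_1$ (contributing $T(G_1)$) and $F_2$ must be a spanning 3-forest of $G_2$ with $i$, $j$, and $v$ in three distinct components; any other configuration either gives $F$ the wrong number of components or fails to separate $u$ from $v$. I would then exhibit a bijection between such spanning 3-forests of $G_2$ and spanning 2-forests of $G_2/ij$ that separate $v$ from $ij$: contraction of $\{i,j\}$ sends one to the other, and conversely the lift of a spanning 2-forest of $G_2/ij$ to $G_2$ has $|V(G_2)|-3$ edges on $|V(G_2)|$ vertices, which together with $c(F')=2$ forces via the Euler identity exactly three components of the lift with $i$ and $j$ necessarily in different ones (the alternative would give a negative first Betti number). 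This yields $T(G_1)\,\F_{G_2/ij}(v,ij)$ for the $(1,3)$ case, and symmetrically $T(G_2)\,\F_{G_1/ij}(u,ij)$ for $(3,1)$.

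The $(2,2)$ case subdivides by whether $i$ and $j$ share a component of $F_1$ and likewise of $F_2$. The configuration in which they share a component in both $F_1$ and $F_2$ is forbidden, since it would create a cycle in $F$. The three remaining configurations contribute, respectively, $\F_{G_1}(u,\{i,j\})\,\F_{G_2}(i,j)$ (same-component in $F_1$ only, which forces $u$ into the other component of $F_1$ while $v$ is automatically placed in the component of $F$ containing $i,j$), $\F_{G_1}(i,j)\,\F_{G_2}(v,\{i,j\})$ (the symmetric case), and $\F_{G_1}(j,\{u,i\})\,\F_{G_2}(i,\{v,j\}) + \F_{G_1}(i,\{u,j\})\,\F_{G_2}(j,\{v,i\})$ (different components on both sides, with the two summands corresponding to the two ways in which the $F_1$- and $F_2$-pieces can be cross-paired so that $u$ and $v$ end up separated).

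The final step is algebraic: Lemma~\ref{obvious} gives $\F_{G_1}(i,\{u,j\}) = \F_{G_1}(u,i) - \F_{G_1}(u,\{i,j\})$ and three analogous identities, and two applications yield $\F_{G_k}(i,j) = \F_{G_k}(w,i) + \F_{G_k}(w,j) - 2\,\F_{G_k}(w,\{i,j\})$ for any auxiliary vertex $w$ (use $w=u$ in $G_1$ and $w=v$ in $G_2$). Substituting these into the sum of the three $(2,2)$ contributions and expanding, all the cross-terms involving $\F_{G_1}(u,\{i,j\})\,\F_{G_2}(v,i)$, $\F_{G_1}(u,\{i,j\})\,\F_{G_2}(v,j)$, $\F_{G_1}(u,i)\,\F_{G_2}(v,\{i,j\})$, and $\F_{G_1}(u,j)\,\F_{G_2}(v,\{i,j\})$ cancel, leaving exactly $\F_{G_1}(u,i)\F_{G_2}(v,j) + \F_{G_1}(u,j)\F_{G_2}(v,i) - 2\F_{G_1}(u,\{i,j\})\F_{G_2}(v,\{i,j\})$. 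Adding the $(1,3)$ and $(3,1)$ contributions gives the claimed identity. The main obstacles will be the bijection for the $(1,3)$/$(3,1)$ cases and the careful case bookkeeping for $(2,2)$; the concluding algebraic cancellation is mechanical once Lemma~\ref{obvious} is in hand.
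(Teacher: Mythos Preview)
Your proposal is correct and follows essentially the same approach as the paper: both argue by restricting a separating 2-forest to $G_1$ and $G_2$, handle the cases where one restriction is a spanning tree via the contraction bijection, and then treat the remaining case by tracking the positions of $i,j$ (and hence $u,v$) before collapsing everything with Lemma~\ref{obvious}. Your use of the Euler identity $c(F_1)+c(F_2)=4$ to organize the case split, and your subdivision of the $(2,2)$ case by the positions of $i,j$ in $F_1$ and $F_2$ separately rather than by the partition of $\{u,v,i,j\}$ in $F$, are minor organizational variants that lead to the same intermediate expressions and the same algebraic finish.
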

\begin{proof}
Let $H$ be a spanning 2-forest of $G$ separating $u$ from $v$.  Let $H_1 = H[V(G_1)]$ and $H_2 = H[V(G_2)]$.

\textbf{Case 1:} $H_2$ is a spanning tree of $G_2$.  Then $H_2$ has a single component  containing $v$ and $i$ and $j$, so $H_1$ cannot contain a path connecting $i$ and $j$, since then $H$ would have a cycle.  Thus $H_1$ has three components: one containing $i$, one containing $j$, and one containing $u$.  Now identify vertices $i$ and $j$ in $G_1$, and let $H_1'$ result from identifying $i$ and $j$ in $H_1$.  Then $H_1'$ is a spanning 2-forest of $G_1/ij$ separating $u$ from the vertex $ij$.  By the multiplication principle, the number of possible $H$ in this case is thus $\F_{G_1/ij}(u,ij)T(G_2)$.

\textbf{Case 2:} $H_1$ is a spanning tree of $G_1$.  Note that this case is completely disjoint from Case 1, since if both $H_1$ and $H_2$ were spanning trees of $G_1$ and $G_2$ respectively, then $H$ would not separate $u$ and $v$.  By an argument symmetric to Case 1, the number of spanning 2-forests arising in this case is $\F_{G_2/ij}(v,ij)T(G_1)$.

\textbf{Case 3:} Neither $H_1$ nor $H_2$ is a spanning tree of $G_1$ or $G_2$.  

\textbf{Claim 1:} The number of spanning 2-forests for Case 3 is given by
\begin{align*}
&\F_{G_1}(\{u,i\},j)\F_{G_2}(\{v,j\},i)+ \F_{G_1}(\{u,j\},i)\F_{G_2}(\{v,i\},j)\\&+\F_{G_1}(u,\{i,j\})[\F_{G_2}(\{v,i\},j)+ \F_{G_2}(\{v,j\},i)] \\&+ \F_{G_2}(v,\{i,j\})[\F_{G_1}(\{u,i\},j)+ \F_{G_1}(\{u,j\},i)].\\
\end{align*}
\begin{proof}
Since $H$ is a spanning 2-forest of $G$ separating $u$ from $v$, then we have the following possibilities: $H$ separates $\{u,i\}$ from $\{v,j\}$, $H$ separates $\{u,j\}$ from $\{v,i\}$, $H$ separates $\{u,i,j\}$ from $v$, or $H$ separates $\{v,i,j\}$ from $u$.  Now we consider how $H$ restricts to $G_1$ and $G_2$.  

In the possibility where $\{u,i\}$ and $\{v,j\}$ are separated, $H$ restricted to $G_1$ is a spanning 2-forest of $G_1$ separating $\{u,i\}$ and $j$, while $H$ restricted to $G_2$ is a spanning 2-forest of $G_2$ separating $\{v,j\}$ and $i$.  This yields the first term of the claim. 

The possibility where $\{u,j\}$ and $\{v,i\}$ are separated is symmetric and yields the second term of the claim.

In the remaining possibilities, one of the vertices $u$ or $v$, is separated from the other three.  We consider the possibility where $u$ is separated from $\{i,j,v\}$ and note that the possibility where $v$ is separated from $\{i,j,u\}$ is symmetric to it.

Let $H_2$ be the restriction of $H$ to $G_2$. Since we are in Case 3, $H_2$ is not a spanning tree of $G_2$ and has 2 components.  Let $x$ be a vertex of $H_2$ not in the component of $v$.  The unique path from $x$ to $v$ in $H$ contains an edge of $G_1$.  Since $\{i,j\}$ is a 2-separator, the path from $x$ to $v$ contains both $i$ and $j$.  Thus, either $i$ or $j$ is in a different component than $v$.  So, $H_2$ is either a spanning 2-forest of $G_2$ which separates $v,i$ from $j$, or separates $v,j$ from $i$.  Since there is not a path from $i$ to $j$ in $H_2$, there is a path from $i$ to $j$ in the restriction of $H$ to $G_1$.  So, the restriction of $H$ to $G_1$ is a 2-forest of $G_1$ separating $u$ and $\{i,j\}$. This yields the third term of the claim.

The fourth term corresponds to the symmetric possibility of $v$ being separated from $\{i,j,u\}$ in $H$.
\end{proof}

A substantial application of Theorem \ref{thm:formulation1} is found in Section \ref{sec:applications}.

%\textbf{Claim 2:} For any graph $G$ and any vertices $x,y,z$ we have \[\F_G(\{x,y\},z) = \F_G(x,z) - \F_G(x,\{y,z\}).\]
%\begin{proof}
%In any spanning 2-forest that separates $x$ from $z$, the vertex $y$ must be either in the same component as $x$ or in the same component as $z$.  Thus \[\F_G(x,z) = \F_G(\{x,y\},z) + \F_G(x,\{y,z\}). \]
%\end{proof}
Applying Lemma \ref{obvious} to the third and fourth terms in the right-hand side of the formula claimed in the theorem in every applicable instance, taking $y$ to be equal to either $i$ or $j$ as appropriate, we find that 
\[\F_{G_1}(u,i)\F_{G_2}(v,j)+\F_{G_1}(u,j)\F_{G_2}(v,i) - 2\F_{G_1}(u,\{i,j\})\F_{G_2}(v,\{i,j\})\] simplifies to the formula of Claim 1.  This completes the proof.
\end{proof}

\begin{lemma}\label{separate2from1}
Let $G$ be any graph, and let $x,y,z$ be any vertices of $G$.  Then
\[
\F_G(\{x,y\},z) = \frac{\F_G(x,z)+\F_G(y,z)-\F_G(x,y)}{2}
\]
\end{lemma}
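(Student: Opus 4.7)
The plan is to apply Lemma \ref{obvious} three times, once for each of the three pairs that can be chosen from $\{x,y,z\}$, and then combine the resulting identities by linear algebra so that only the desired quantities remain.

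First I would write down the three instances of Lemma \ref{obvious} obtained by permuting the roles of $x,y,z$. Each application says that in any spanning 2-forest separating two of these vertices, the third must lie in one of the two components, giving the identities
\begin{align*}
\F_G(x,z) &= \F_G(\{x,y\},z) + \F_G(x,\{y,z\}), \\
\F_G(y,z) &= \F_G(\{x,y\},z) + \F_G(y,\{x,z\}), \\
\F_G(x,y) &= \F_G(x,\{y,z\}) + \F_G(y,\{x,z\}).
\end{align*}

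Next I would add the first two equations and subtract the third. The terms $\F_G(x,\{y,z\})$ and $\F_G(y,\{x,z\})$ cancel, leaving
\[
\F_G(x,z) + \F_G(y,z) - \F_G(x,y) = 2\,\F_G(\{x,y\},z),
\]
and dividing by $2$ yields the claimed formula.

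There is no real obstacle here; the only subtle point is making sure that the three partitions of $\{x,y,z\}$ which appear as arguments of $\F_G$ are handled consistently (for instance, $\F_G(\{x,z\},y)$ and $\F_G(y,\{x,z\})$ denote the same quantity by definition), so that the cancellation actually occurs. Everything else is a direct combinatorial bookkeeping step building on Lemma \ref{obvious}.
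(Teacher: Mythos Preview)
Your proof is correct and essentially identical to the paper's own argument: it too writes down the three permuted instances of Lemma~\ref{obvious} and then solves the resulting linear system (your add-first-two, subtract-third step is just the explicit version of what the paper calls ``solving this system''). Your remark that $\F_G(\{x,z\},y)=\F_G(y,\{x,z\})$ is exactly the consistency check needed to match the paper's third equation with yours.
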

\begin{proof}
Interchanging the roles of $x,y,z$ in Lemma \ref{obvious}, we get the system
\begin{align*}
\F_{G}(x,z) &= \F_{G}(\{x,y\},z) + \F_{G}(x,\{y,z\})\\
\F_{G}(y,z) &= \F_{G}(\{x,y\},z) + \F_{G}(y,\{x,z\})\\
\F_{G}(x,y) &= \F_{G}(\{x,z\},y) + \F_{G}(x,\{y,z\}).
\end{align*}
Solving this system yields the desired result.
\end{proof}

From Lemma \ref{separate2from1} we obtain an alternative form of the formula in Theorem \ref{thm:formulation1}.  This eliminates counting spanning 2-forests that separate a vertex from a pair of vertices.

\begin{corollary}\label{cor:16}
Let $G$ be a graph with a 2-separation, with $i,j$ the two vertices separating the graph, and $G_1, G_2$ the two graphs of the separation.  Let $u\in V(G_1)$ and $v\in V(G_2)$.  Then
\begin{align*}
\F_G(u,v) =& \F_{G_1/ij}(u,ij)T(G_2) + \F_{G_2/ij}(v,ij)T(G_1)\\& + \frac12\F_{G_1}(u,i)\F_{G_2}(v,j)+\frac12\F_{G_1}(u,j)\F_{G_2}(v,i)\\&-\frac12\F_{G_1}(u,i)\F_{G_2}(v,i)-\frac12\F_{G_1}(u,j)\F_{G_2}(v,j)\\&+\frac12(\F_{G_1}(u,i)+\F_{G_1}(u,j))\F_{G_2}(i,j)\\&+\frac12\F_{G_1}(i,j)(\F_{G_2}(v,i)+\F_{G_2}(v,j))\\&-\frac12\F_{G_1}(i,j)\F_{G_2}(i,j).
\end{align*}
\end{corollary}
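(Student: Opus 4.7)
The plan is to obtain this corollary directly from Theorem~\ref{thm:formulation1} by eliminating its last term $-2\F_{G_1}(u,\{i,j\})\F_{G_2}(v,\{i,j\})$ using Lemma~\ref{separate2from1}. Applying the lemma in $G_1$ with $x=i$, $y=j$, $z=u$ yields
\[
\F_{G_1}(u,\{i,j\}) = \tfrac{1}{2}\bigl(\F_{G_1}(u,i)+\F_{G_1}(u,j)-\F_{G_1}(i,j)\bigr),
\]
and symmetrically
\[
\F_{G_2}(v,\{i,j\}) = \tfrac{1}{2}\bigl(\F_{G_2}(v,i)+\F_{G_2}(v,j)-\F_{G_2}(i,j)\bigr).
\]

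Next I would substitute these expressions into the product $-2\F_{G_1}(u,\{i,j\})\F_{G_2}(v,\{i,j\})$, obtaining $-\tfrac{1}{2}$ times the product of two trinomials. Expanding gives nine terms; the two ``cross'' contributions $-\tfrac12\F_{G_1}(u,i)\F_{G_2}(v,j)$ and $-\tfrac12\F_{G_1}(u,j)\F_{G_2}(v,i)$ combine with the surviving $\F_{G_1}(u,i)\F_{G_2}(v,j)+\F_{G_1}(u,j)\F_{G_2}(v,i)$ from Theorem~\ref{thm:formulation1} to produce exactly the $\tfrac12$-weighted cross terms on the first displayed line of the corollary. The remaining seven expanded terms match, sign by sign, the ``same-index'' products $-\tfrac12\F_{G_1}(u,i)\F_{G_2}(v,i)$ and $-\tfrac12\F_{G_1}(u,j)\F_{G_2}(v,j)$, the two lines involving $\F_{G_2}(i,j)$ and $\F_{G_1}(i,j)$, and the trailing $-\tfrac12\F_{G_1}(i,j)\F_{G_2}(i,j)$.

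Since the tree terms $\F_{G_1/ij}(u,ij)T(G_2)+\F_{G_2/ij}(v,ij)T(G_1)$ are carried over verbatim from Theorem~\ref{thm:formulation1}, collecting all contributions yields the claimed identity. No combinatorial argument beyond Lemma~\ref{separate2from1} is required; the only genuine obstacle is purely clerical, namely careful bookkeeping of signs and factors of $\tfrac12$ across the nine-term expansion, followed by grouping them in the order in which the corollary displays them.
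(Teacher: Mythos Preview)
Your proposal is correct and matches the paper's approach exactly: the paper simply states that the corollary follows from Theorem~\ref{thm:formulation1} by applying Lemma~\ref{separate2from1} to eliminate the terms $\F_{G_1}(u,\{i,j\})$ and $\F_{G_2}(v,\{i,j\})$, and your write-up carries out precisely that substitution and expansion.
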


\begin{lemma}\label{tree2forest}~\cite[Corollary 4.2]{lovaszrw} Let $i,j$ be vertices of a graph $G$.  Then  $T(G/ij)=F_{G}(i,j)$.
\end{lemma}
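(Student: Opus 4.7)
The plan is to construct an explicit bijection between spanning trees of $G/ij$ and spanning 2-forests of $G$ separating $i$ and $j$, from which the count follows immediately. A dimensional sanity check: if $|V(G)|=n$, then $G/ij$ has $n-1$ vertices, so a spanning tree of $G/ij$ and a spanning 2-forest of $G$ both have $n-2$ edges. By Definition \ref{def:identify}, every edge of $G/ij$ corresponds to a unique non-$\{i,j\}$ edge of $G$, so an edge set in $G/ij$ pulls back unambiguously to a subgraph of $G$ that uses no $\{i,j\}$-edge.

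The forward map I would use is this pullback. Given a spanning tree $T'$ of $G/ij$, let $T \subseteq E(G)$ be the edges corresponding to the edges of $T'$. The two nontrivial things to verify are that $T$ is acyclic and that $i$ and $j$ lie in different components of $T$. For acyclicity I would split on how many of $i,j$ a hypothetical cycle of $T$ meets; the interesting case is a cycle through both, which decomposes at $i$ and $j$ into two paths, each of length at least $2$ (since $T$ contains no $\{i,j\}$-edge), and each such path descends to a genuine cycle through $ij$ in $T'$, contradicting that $T'$ is a tree. Once $T$ is known acyclic with $n-2$ edges on $n$ vertices it has exactly two components, and any path in $T$ from $i$ to $j$ would descend to a nontrivial closed walk at $ij$ in $T'$, again a contradiction.

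For the inverse, I would identify $i$ and $j$ in any spanning 2-forest $F$ of $G$ separating them to obtain a subgraph $F' \subseteq G/ij$ on $n-1$ vertices with $n-2$ edges. The two components of $F$, one containing $i$ and one containing $j$, fuse into a single component of $F'$, so $F'$ is connected and hence a spanning tree of $G/ij$. The two maps are manifestly inverse.

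The main obstacle I anticipate is purely bookkeeping, when $i$ and $j$ are adjacent in $G$ or share common neighbors (so $G/ij$ has multiple edges), but once one commits to the edge correspondence spelled out in Definition \ref{def:identify} no real subtlety remains. As an alternative route, one can instead invoke the matrix-tree identifications $T(G/ij)=\det L_{G/ij}(ij)$ and $\F_G(i,j)=\det L_G(i,j)$ recorded earlier in the paper, and then observe that deleting the row and column indexed by $ij$ from $L_{G/ij}$ produces precisely the submatrix of $L_G$ with rows and columns $i,j$ removed; I would prefer the bijective argument since it is more in keeping with the combinatorial flavor of the surrounding results.
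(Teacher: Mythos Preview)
Your proof is correct. The paper itself does not supply a proof of this lemma; it is stated with a citation to \cite[Corollary~4.2]{lovaszrw} and used as a black box. Your bijective argument is the standard combinatorial proof and is complete as written; the matrix-tree alternative you sketch is equally valid and in fact just combines the two determinantal identities $T(G/ij)=\det L_{G/ij}(ij)$ and $\F_G(i,j)=\det L_G(i,j)$ already recorded in the introduction, together with the observation that these two submatrices coincide. Either route would serve as a self-contained replacement for the citation.
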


The following theorem is found in~\cite[Theorem 4.5]{YangKlein}.  (To our knowledge no combinatorial proof has been given.)

\begin{theorem}\label{identification} Let  $u,v,i,j$ be vertices of a graph $G$.  Then 

$$r_{G/ij}(u,v)=r_G(u,v)-\dfrac{[r_G(u,i)+r_G(v,j)-r_G(u,j)-r_G(v,i)]^2}{4r_G(i,j)}.$$

\end{theorem}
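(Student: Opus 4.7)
The plan is to realize $G/ij$ as the $c \to \infty$ limit of the weighted network $G_c$ obtained from $G$ by adjoining a new $ij$-edge of conductance $c$ (in parallel with any existing edges), then apply a rank-one update to the pseudoinverse. Writing $L$ for the combinatorial Laplacian $L_G$, the weighted Laplacian of $G_c$ is $L_c = L + c(\mathbf e_i - \mathbf e_j)(\mathbf e_i - \mathbf e_j)^T$. Shorting $i$ to $j$ corresponds to $c \to \infty$: the Dirichlet variational characterization $r_{G_c}(u,v)^{-1} = \min\{\sum_{\{a,b\}\in E(G)}(\phi_a - \phi_b)^2 + c(\phi_i - \phi_j)^2 : \phi_u - \phi_v = 1\}$ shows that any optimal $\phi$ must satisfy $\phi_i = \phi_j$ in the limit, and hence $r_{G_c}(u,v) \to r_{G/ij}(u,v)$ as $c \to \infty$.

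Next, I would compute $r_{G_c}(u,v)$ via a Sherman--Morrison update. Since $\mathbf e_i - \mathbf e_j$ lies in $\mathbf 1^\perp = \ker(L)^\perp$, both $L$ and $L_c$ have the same kernel $\mathrm{span}\{\mathbf 1\}$; restricting to $\mathbf 1^\perp$ where both operators are positive definite and applying the standard rank-one inverse update yields
\[
L_c^\dagger \;=\; L^\dagger \;-\; \frac{c\,L^\dagger(\mathbf e_i - \mathbf e_j)(\mathbf e_i - \mathbf e_j)^T L^\dagger}{1 + c\, r_G(i,j)},
\]
using $(\mathbf e_i - \mathbf e_j)^T L^\dagger (\mathbf e_i - \mathbf e_j) = r_G(i,j)$. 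Sandwiching between $(\mathbf e_u - \mathbf e_v)^T$ and $(\mathbf e_u - \mathbf e_v)$ gives
\[
r_{G_c}(u,v) \;=\; r_G(u,v) \;-\; \frac{c\,\bigl[(\mathbf e_u - \mathbf e_v)^T L^\dagger (\mathbf e_i - \mathbf e_j)\bigr]^2}{1 + c\, r_G(i,j)}.
\]

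The last step is to eliminate $L^\dagger$ from the numerator by a polarization identity. Expanding each resistance as $r_G(x,y) = L^\dagger_{xx} - 2L^\dagger_{xy} + L^\dagger_{yy}$ and cancelling diagonal entries yields
\[
2\,(\mathbf e_u - \mathbf e_v)^T L^\dagger (\mathbf e_i - \mathbf e_j) \;=\; r_G(u,j) + r_G(v,i) - r_G(u,i) - r_G(v,j),
\]
so the squared factor in the numerator equals $\tfrac14[r_G(u,i) + r_G(v,j) - r_G(u,j) - r_G(v,i)]^2$. Letting $c \to \infty$ in the preceding display produces exactly the stated formula. The main technical subtlety is justifying Sherman--Morrison for the singular matrix $L$; this is resolved by passing to $\mathbf 1^\perp$, where the standard finite-dimensional identity applies verbatim, and lifting back via the fact that $L^\dagger$ vanishes on $\mathrm{span}\{\mathbf 1\}$. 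The limit itself is routine, because $1 + c\, r_G(i,j)$ grows linearly (noting $r_G(i,j) > 0$ since $G$ is connected and $i \ne j$).
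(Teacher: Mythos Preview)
Your argument is correct. The Sherman--Morrison update on $\mathbf 1^\perp$ is legitimate because $L$ and $L_c$ share the kernel $\mathrm{span}\{\mathbf 1\}$ and the perturbation vector $\mathbf e_i-\mathbf e_j$ lies in $\mathbf 1^\perp$; the polarization computation and the $c\to\infty$ limit are routine and correct as stated.

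As for comparison: the paper does not actually prove this theorem. It is quoted from Yang and Klein \cite[Theorem~4.5]{YangKlein} and used as a black box in the proof of Theorem~\ref{thm:sameside}; the authors even remark that ``to our knowledge no combinatorial proof has been given.'' Your proof is therefore strictly more than what the paper offers for this statement. It is an algebraic (not combinatorial) argument, in the same spirit as the cited source, but your realization of $G/ij$ as the $c\to\infty$ limit of a parallel conductance together with the rank-one pseudoinverse update is clean and self-contained. One minor point you might make explicit: the shorting limit $r_{G_c}(u,v)\to r_{G/ij}(u,v)$ follows because the Dirichlet energy is monotone in $c$ and bounded, and any weak limit of minimizers satisfies $\phi_i=\phi_j$; alternatively one can bypass the limit entirely by computing $L_{G/ij}^\dagger$ directly as the pseudoinverse of $P L P^T$ for the natural projection $P$, but your approach is perfectly adequate.
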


\begin{theorem}\label{thm:sameside} Let $G$ be a graph with a 2-separation, with $i,j$ the two vertices separating the graph, and $G_1, G_2$ the two graphs of the separation.   If $u,v$ are in $G_1$, then 
\[
r_G(u,v)=r_{G_1}(u,v) -\dfrac{[r_{G_1}(u,i)+r_{G_1}(v,j)-r_{G_1}(u,j)-r_{G_1}(v,i)]^2}{4[r_{G_1}(i,j)+r_{G_2}(i,j)]}.
\]
\end{theorem}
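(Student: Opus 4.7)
The plan is to combine Theorem \ref{2sep-forest-same-side}, Theorem \ref{2sep-tree}, Lemma \ref{tree2forest}, and Theorem \ref{identification}, reducing everything first to counts of forests and trees and then back to resistances.

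First I would invoke Theorem \ref{thm:2forests/trees} to write $r_G(u,v) = \F_G(u,v)/T(G)$, then substitute the reduction formulas
\[
\F_G(u,v) = \F_{G_1}(u,v)\F_{G_2}(i,j) + \F_{G_1/ij}(u,v)T(G_2)
\]
(from Theorem \ref{2sep-forest-same-side}) and $T(G)=T(G_1)\F_{G_2}(i,j)+T(G_2)\F_{G_1}(i,j)$ (from Theorem \ref{2sep-tree}). Next I would divide the numerator and denominator of the resulting fraction by $T(G_1)T(G_2)$. This converts $\F_{G_1}(u,v)/T(G_1)$ into $r_{G_1}(u,v)$, converts $\F_{G_k}(i,j)/T(G_k)$ into $r_{G_k}(i,j)$, and, using Lemma \ref{tree2forest} to identify $T(G_1/ij)=\F_{G_1}(i,j)$, converts $\F_{G_1/ij}(u,v)/T(G_1)$ into $r_{G_1}(i,j)\,r_{G_1/ij}(u,v)$. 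The identity this produces is the compact intermediate form
\[
r_G(u,v) = \frac{r_{G_1}(u,v)\,r_{G_2}(i,j) + r_{G_1}(i,j)\,r_{G_1/ij}(u,v)}{r_{G_1}(i,j) + r_{G_2}(i,j)}.
\]

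Second, I would eliminate the awkward $r_{G_1/ij}(u,v)$ by applying Theorem \ref{identification} inside $G_1$, namely
\[
r_{G_1/ij}(u,v) = r_{G_1}(u,v) - \frac{D^2}{4\,r_{G_1}(i,j)}, \quad D := r_{G_1}(u,i)+r_{G_1}(v,j)-r_{G_1}(u,j)-r_{G_1}(v,i).
\]
Substituting this into the numerator above, the factor of $r_{G_1}(i,j)$ cancels the denominator in $D^2/(4\,r_{G_1}(i,j))$, so the numerator collapses to $r_{G_1}(u,v)[r_{G_1}(i,j)+r_{G_2}(i,j)] - D^2/4$. Dividing by $r_{G_1}(i,j)+r_{G_2}(i,j)$ yields the claimed formula.

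I do not anticipate a major obstacle: the argument is essentially arithmetic once one has the right substitutions. The only point requiring a little care is the bookkeeping step that turns $\F_{G_1/ij}(u,v)/T(G_1)$ into the product $r_{G_1}(i,j)\,r_{G_1/ij}(u,v)$, since it hinges on recognizing $T(G_1/ij) = \F_{G_1}(i,j)$ from Lemma \ref{tree2forest}; once that is in place, Theorem \ref{identification} does the remaining work and the formula follows by a single cancellation.
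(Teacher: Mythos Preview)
Your proposal is correct and follows essentially the same approach as the paper's proof: both substitute Theorems \ref{2sep-forest-same-side} and \ref{2sep-tree} into $r_G(u,v)=\F_G(u,v)/T(G)$, divide through by $T(G_1)T(G_2)$, invoke Lemma \ref{tree2forest} to recognize $T(G_1/ij)=\F_{G_1}(i,j)$, and then apply Theorem \ref{identification} to eliminate $r_{G_1/ij}(u,v)$. The final algebraic cancellation you describe is exactly what the paper leaves implicit in its last line.
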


\begin{proof}
Using Theorems \ref{thm:2forests/trees}, \ref{2sep-tree}, and \ref{2sep-forest-same-side}, we have

\begin{align*}
r_G(u,v)=&\dfrac{\F_G(u,v)}{T(G)}=\dfrac{\F_{G_1}(u,v)\F_{G_2}(i,j) + \F_{G_1/ij}(u,v)T(G_2)}{T(G_1)\F_{G_2}(i,j) + T(G_2)\F_{G_1}(i,j)}\\  =& \dfrac{\F_{G_1}(u,v)\F_{G_2}(i,j) + \F_{G_1/ij}(u,v)T(G_2)}{T(G_1) T(G_2)[r_{G_2}(i,j)+r_{G_1}(i,j)]}\\ =& \dfrac{r_{G_1}(u,v)\ r_{G_2}(i,j)}{r_{G_1}(i,j)+r_{G_2}(i,j)}+\dfrac{r_{G_1/ij}(u,v)\ T(G_1/ij)}{T(G_1)(r_{G_1}(i,j)+r_{G_2}(i,j))}.
\end{align*}
 
By Lemma \ref{tree2forest}, $T(G_1/ij)=\F_{G_1}(i,j)$.  So we have
\[
r_G(u,v)=\dfrac{r_{G_1}(u,v)\ r_{G_2}(i,j)}{r_{G_1}(i,j)+r_{G_2}(i,j)}+\dfrac{r_{G_1/ij}(u,v)\ r_{G_1}(i,j)}{r_{G_1}(i,j)+r_{G_2}(i,j)}.
\]

Using Theorem \ref{identification}, we replace $r_{G_1/ij}(u,v)$, and arrive at the desired result.
\end{proof}

In order to obtain an analogous formula to Theorem \ref{thm:sameside} in the case that $u$ is in $G_1$ and $v$ is in $G_2$, we divide both sides of the identity in Corollary \ref{cor:16} by $T(G)$ and make use of Theorems \ref{thm:2forests/trees}, \ref{2sep-tree}, and Lemma \ref{tree2forest} to obtain the following.

\begin{theorem}  Let $G$ be a graph with a 2-separation, with $i,j$ the two vertices separating the graph, and $G_1, G_2$ the two graphs of the separation.   If $u$ is in $G_1$ and $v$ is in $G_2$, then 
\[
r_G(u,v)=\dfrac{NUM}{[r_{G_1}(i,j)+r_{G_2}(i,j)]} \\ \text{ where }
\]
\begin{align*}
NUM=& r_{G_1/ij}(u,ij)\ r_{G_1}(i,j) + r_{G_2/ij}(v,ij)\ r_{G_2}(i,j))\\& + \frac12 r_{G_1}(u,i)\ r_{G_2}(v,j)+\frac12 r_{G_1}(u,j)\ r_{G_2}(v,i)\\&-\frac12r_{G_1}(u,i)\ r_{G_2}(v,i)-\frac12r_{G_1}(u,j)\ r_{G_2}(v,j)\\&+\frac12(r_{G_1}(u,i)+r_{G_1}(u,j))\ r_{G_2}(i,j)\\&+\frac12r_{G_1}(i,j)(r_{G_2}(v,i)+r_{G_2}(v,j))\\&-\frac12r_{G_1}(i,j)\ r_{G_2}(i,j).
\end{align*}
\end{theorem}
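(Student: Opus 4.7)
The plan is to follow exactly the outline sketched in the paragraph preceding the theorem: start with the $\F_G(u,v)$ identity in Corollary~\ref{cor:16}, divide both sides by $T(G)$, and translate each term using the definition $r_{G_k}(x,y) = \F_{G_k}(x,y)/T(G_k)$ together with Lemma~\ref{tree2forest}. The denominator will come from Theorem~\ref{2sep-tree}.

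First I would establish the denominator. By Theorem~\ref{2sep-tree} and Theorem~\ref{thm:2forests/trees},
\[
T(G) = T(G_1)\F_{G_2}(i,j) + T(G_2)\F_{G_1}(i,j) = T(G_1)T(G_2)\bigl[r_{G_1}(i,j) + r_{G_2}(i,j)\bigr].
\]
So after dividing the identity of Corollary~\ref{cor:16} by $T(G)$, the left side becomes $r_G(u,v)$ and the right side acquires $T(G_1)T(G_2)[r_{G_1}(i,j)+r_{G_2}(i,j)]$ as a common denominator. The goal is to check that each term in the numerator collapses to the corresponding term in $NUM$.

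Next I would process the two types of terms separately. For a product of two 2-forest counts, one on each side, each factor satisfies $\F_{G_k}(x,y) = r_{G_k}(x,y)T(G_k)$, so the $T(G_1)T(G_2)$ cancels the factor of the same name in the denominator, leaving $r_{G_1}(x,y)r_{G_2}(x',y')/[r_{G_1}(i,j)+r_{G_2}(i,j)]$. This accounts for the six ``product'' terms in $NUM$ (the two half-products in $u,v$ with mixed $i,j$, the two half-products with matched $i,j$, the two half-products mixing $(u,i)+(u,j)$ against $(i,j)$, and similarly for the $v$ side, together with the final $-\tfrac12 r_{G_1}(i,j)r_{G_2}(i,j)$ term). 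For the two ``contracted'' terms $\F_{G_1/ij}(u,ij)T(G_2)$ and $\F_{G_2/ij}(v,ij)T(G_1)$, I would use Lemma~\ref{tree2forest} in the form $T(G_k/ij) = \F_{G_k}(i,j) = r_{G_k}(i,j)T(G_k)$, so that
\[
\F_{G_1/ij}(u,ij)\,T(G_2) = r_{G_1/ij}(u,ij)\,T(G_1/ij)\,T(G_2) = r_{G_1/ij}(u,ij)\,r_{G_1}(i,j)\,T(G_1)T(G_2),
\]
which after dividing yields the first term $r_{G_1/ij}(u,ij)\,r_{G_1}(i,j)/[r_{G_1}(i,j)+r_{G_2}(i,j)]$ of $NUM$; the other contracted term is symmetric.

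No conceptual step is hard here: the theorem is essentially the resistance--distance rephrasing of Corollary~\ref{cor:16}, and Lemma~\ref{tree2forest} is exactly the bridge needed for the two $\F_{G_k/ij}(\cdot,ij)$ terms. The only real obstacle is bookkeeping, namely verifying term-by-term that the nine summands in $NUM$ arise from the nine summands in Corollary~\ref{cor:16} with the right coefficients and signs, and that every $T(G_1)T(G_2)$ prefactor cancels cleanly against the denominator coming from $T(G)$. Once this matching is done, the formula is proven.
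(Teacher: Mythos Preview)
Your proposal is correct and follows precisely the route the paper itself takes: divide the identity of Corollary~\ref{cor:16} by $T(G)$, rewrite $T(G)=T(G_1)T(G_2)[r_{G_1}(i,j)+r_{G_2}(i,j)]$ via Theorem~\ref{2sep-tree}, convert each $\F_{G_k}(x,y)$ to $r_{G_k}(x,y)T(G_k)$ via Theorem~\ref{thm:2forests/trees}, and handle the two contracted terms with Lemma~\ref{tree2forest}. Aside from a harmless miscount (there are seven ``product'' summands, not six), your bookkeeping matches the paper's exactly.
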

\section{Applications to 2-connected graphs}\label{sec:applications}

\subsection{Resistance distance and spanning 2-forests in the Sierpinski triangle}
We begin by showing inductively that the resistance distance $r_n(a,b)$ between two vertices of degree 2 in the stage $n$ Sierpinski triangle $S_n$ is $\frac{2}{3}\left(\frac{5}{3}\right)^n$. 
The first three stages are shown in Figure \ref{fig:Sn_building}. 

\begin{figure}[h!]
    \centering
%S_0
\begin{tikzpicture}[line cap=round,line join=round,>=triangle 45,x=.7cm,y=.7cm]
\draw [line width=1.6pt] (2.,3.4641016151377553)-- (0.,0.);
\draw [line width=1.6pt] (0.,0.)-- (4.,0.);
\draw [line width=1.6pt] (4.,0.)-- (2.,3.4641016151377553);
\begin{scriptsize}
\draw [fill=black] (0.,0.) circle (.8pt);
%\draw[color=black] (-0.32,-0.03) node {$A$};
\draw [fill=black] (4.,0.) circle (.8pt);
%\draw[color=black] (4.26,-0.07) node {$B$};
\draw [fill=black] (2.,3.4641016151377553) circle (.8pt);
%\draw[color=black] (1.96,3.79) node {$C$};
\end{scriptsize}
%\begin{small}
%\draw[color=black] (-0.32,-0.03) node {$a$};
%\draw[color=black] (4.26,-0.07) node {$b$};
%\end{small}
\node at (2,-.5) {$S_0$};
\end{tikzpicture}
%
%S_1
\begin{tikzpicture}[line cap=round,line join=round,>=triangle 45,x=.7cm,y=.7cm]
\draw [line width=1.6pt] (2.,3.4641016151377553)-- (0.,0.);
\draw [line width=1.6pt] (0.,0.)-- (4.,0.);
\draw [line width=1.6pt] (4.,0.)-- (2.,3.4641016151377553);
\draw [line width=1.6pt] (1.,1.7320508075688776)-- (3.,1.7320508075688776);
\draw [line width=1.6pt] (3.,1.7320508075688776)-- (2.,0.);
\draw [line width=1.6pt] (2.,0.)-- (1.,1.7320508075688776);
\draw [fill=black] (0.,0.) circle (.8pt);
\draw [fill=black] (4.,0.) circle (.8pt);
\draw [fill=black] (2.,3.4641016151377553) circle (.8pt);
\draw [fill=black] (1.,1.7320508075688776) circle (.8pt);
\draw [fill=black] (3.,1.7320508075688776) circle (.8pt);
\draw [fill=black] (2.,0.) circle (.8pt);
%\begin{small}
%\draw[color=black] (-0.32,-0.03) node {$a$};
%\draw[color=black] (4.26,-0.07) node {$b$};
%\draw[color=black] (1.96,3.79) node {$C$};
%\draw[color=black] (0.74,1.99) node {$D$};
%\draw[color=black] (3.24,1.85) node {$E$};
%\draw[color=black] (1.98,-0.25) node {$F$};
%\end{small}
\node at (2,-.5) {$S_1$};
\end{tikzpicture}
%
%S_2
\begin{tikzpicture}[line cap=round,line join=round,>=triangle 45,x=.7cm,y=.7cm]
\draw [line width=1.6pt] (2.,3.4641016151377553)-- (0.,0.);
\draw [line width=1.6pt] (0.,0.)-- (4.,0.);
\draw [line width=1.6pt] (4.,0.)-- (2.,3.4641016151377553);
\draw [line width=1.6pt] (1.,1.7320508075688776)-- (3.,1.7320508075688776);
\draw [line width=1.6pt] (3.,1.7320508075688776)-- (2.,0.);
\draw [line width=1.6pt] (2.,0.)-- (1.,1.7320508075688776);
\draw [line width=1.6pt] (1.5,2.5980762113533165)-- (2.5,2.5980762113533165);
\draw [line width=1.6pt] (2.5,2.5980762113533165)-- (2.,1.7320508075688776);
\draw [line width=1.6pt] (2.,1.7320508075688776)-- (1.5,2.5980762113533165);
\draw [line width=1.6pt] (0.5,0.8660254037844388)-- (1.5,0.8660254037844388);
\draw [line width=1.6pt] (1.5,0.8660254037844388)-- (1.,0.);
\draw [line width=1.6pt] (1.,0.)-- (0.5,0.8660254037844388);
\draw [line width=1.6pt] (2.5,0.8660254037844388)-- (3.5,0.8660254037844388);
\draw [line width=1.6pt] (3.5,0.8660254037844388)-- (3.,0.);
\draw [line width=1.6pt] (3.,0.)-- (2.5,0.8660254037844388);
\begin{scriptsize}
\draw [fill=black] (0.,0.) circle (.8pt);
\draw [fill=black] (4.,0.) circle (.8pt);
\draw [fill=black] (2.,3.4641016151377553) circle (.8pt);
\draw [fill=black] (1.,1.7320508075688776) circle (.8pt);
\draw [fill=black] (3.,1.7320508075688776) circle (.8pt);
\draw [fill=black] (2.,0.) circle (.8pt);
\draw [fill=black] (0.5,0.8660254037844388) circle (.8pt);
\draw [fill=black] (1.5,2.5980762113533165) circle (.8pt);
\draw [fill=black] (2.5,2.5980762113533165) circle (.8pt);
\draw [fill=black] (3.5,0.8660254037844388) circle (.8pt);
\draw [fill=black] (3.,0.) circle (.8pt);
\draw [fill=black] (1.,0.) circle (.8pt);
\draw [fill=black] (1.5,0.8660254037844388) circle (.8pt);
\draw [fill=black] (2.,1.7320508075688776) circle (.8pt);
\draw [fill=black] (2.5,0.8660254037844388) circle (.8pt);
\end{scriptsize}
%\begin{small}
%\draw[color=black] (-0.32,-0.03) node {$a$};
%\draw[color=black] (4.26,-0.07) node {$b$};
%\end{small}
\node at (2,-.5) {$S_2$};
\end{tikzpicture}
    \caption{The first three stages of the Sierpinski triangle, $S_0$, $S_1$, and $S_2$}
    \label{fig:Sn_building}
\end{figure}
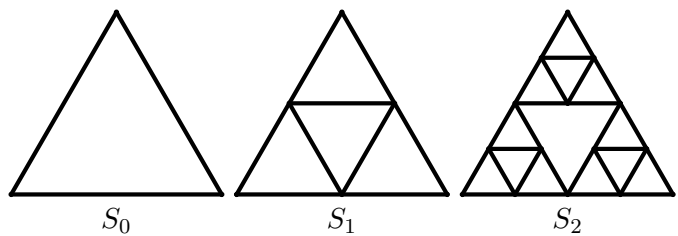

\begin{figure}[h!]
    \begin{center}
\begin{tikzpicture}[line cap=round,line join=round,>=triangle 45,x=.7cm,y=.7cm]
\draw [line width=1.6pt] (2.,3.4641016151377553)-- (0.,0.);
\draw [line width=1.6pt] (0.,0.)-- (4.,0.);
\draw [line width=1.6pt] (4.,0.)-- (2.,3.4641016151377553);
\draw [line width=1.6pt] (1.,1.7320508075688776)-- (3.,1.7320508075688776);
\draw [line width=1.6pt] (3.,1.7320508075688776)-- (2.,0.);
\draw [line width=1.6pt] (2.,0.)-- (1.,1.7320508075688776);
\draw [line width=1.6pt] (1.5,2.5980762113533165)-- (2.5,2.5980762113533165);
\draw [line width=1.6pt] (2.5,2.5980762113533165)-- (2.,1.7320508075688776);
\draw [line width=1.6pt] (2.,1.7320508075688776)-- (1.5,2.5980762113533165);
\draw [line width=1.6pt] (0.5,0.8660254037844388)-- (1.5,0.8660254037844388);
\draw [line width=1.6pt] (1.5,0.8660254037844388)-- (1.,0.);
\draw [line width=1.6pt] (1.,0.)-- (0.5,0.8660254037844388);
\draw [line width=1.6pt] (2.5,0.8660254037844388)-- (3.5,0.8660254037844388);
\draw [line width=1.6pt] (3.5,0.8660254037844388)-- (3.,0.);
\draw [line width=1.6pt] (3.,0.)-- (2.5,0.8660254037844388);
\begin{scriptsize}
\draw [fill=black] (0.,0.) circle (.8pt);
\draw [fill=black] (4.,0.) circle (.8pt);
\draw [fill=black] (2.,3.4641016151377553) circle (.8pt);
\draw [fill=black] (1.,1.7320508075688776) circle (.8pt);
\draw [fill=black] (3.,1.7320508075688776) circle (.8pt);
\draw [fill=black] (2.,0.) circle (.8pt);
\draw [fill=black] (0.5,0.8660254037844388) circle (.8pt);
\draw [fill=black] (1.5,2.5980762113533165) circle (.8pt);
\draw [fill=black] (2.5,2.5980762113533165) circle (.8pt);
\draw [fill=black] (3.5,0.8660254037844388) circle (.8pt);
\draw [fill=black] (3.,0.) circle (.8pt);
\draw [fill=black] (1.,0.) circle (.8pt);
\draw [fill=black] (1.5,0.8660254037844388) circle (.8pt);
\draw [fill=black] (2.,1.7320508075688776) circle (.8pt);
\draw [fill=black] (2.5,0.8660254037844388) circle (.8pt);
\end{scriptsize}
\node at (.5,2.4) {$G_1$};
\begin{small}
\draw[color=black] (-0.32,-0.03) node {$a$};
\draw[color=black] (4.26,-0.07) node {$b$};
\draw[color=black] (1.96,3.79) node {$c$};
%\draw[color=black] (0.74,1.99) node {$D$};
%\draw[color=black] (3.24,1.85) node {$E$};
%\draw[color=black] (1.98,-0.25) node {$F$};
\end{small}
\end{tikzpicture}\\[5mm]
\begin{minipage}{6.5cm}
\begin{tikzpicture}[line cap=round,line join=round,>=triangle 45,x=.7cm,y=.7cm]
\draw [line width=1.6pt] (2.,3.4641016151377553)-- (0.,0.);
\draw [line width=1.6pt] (0.,0.)-- (4.,0.);
\draw [line width=1.6pt] (4.,0.)-- (2.,3.4641016151377553);
\draw [line width=1.6pt] (1.,1.7320508075688776)-- (3.,1.7320508075688776);
\draw [line width=1.6pt] (3.,1.7320508075688776)-- (2.,0.);
\draw [line width=1.6pt] (2.,0.)-- (1.,1.7320508075688776);
\draw [line width=1.6pt] (1.5,2.5980762113533165)-- (2.5,2.5980762113533165);
\draw [line width=1.6pt] (2.5,2.5980762113533165)-- (2.,1.7320508075688776);
\draw [line width=1.6pt] (2.,1.7320508075688776)-- (1.5,2.5980762113533165);
\draw [line width=1.6pt] (0.5,0.8660254037844388)-- (1.5,0.8660254037844388);
\draw [line width=1.6pt] (1.5,0.8660254037844388)-- (1.,0.);
\draw [line width=1.6pt] (1.,0.)-- (0.5,0.8660254037844388);
\draw [line width=1.6pt] (2.5,0.8660254037844388)-- (3.5,0.8660254037844388);
\draw [line width=1.6pt] (3.5,0.8660254037844388)-- (3.,0.);
\draw [line width=1.6pt] (3.,0.)-- (2.5,0.8660254037844388);
\begin{scriptsize}
\draw [fill=black] (0.,0.) circle (.8pt);
\draw [fill=black] (4.,0.) circle (.8pt);
\draw [fill=black] (2.,3.4641016151377553) circle (.8pt);
\draw [fill=black] (1.,1.7320508075688776) circle (.8pt);
\draw [fill=black] (3.,1.7320508075688776) circle (.8pt);
\draw [fill=black] (2.,0.) circle (.8pt);
\draw [fill=black] (0.5,0.8660254037844388) circle (.8pt);
\draw [fill=black] (1.5,2.5980762113533165) circle (.8pt);
\draw [fill=black] (2.5,2.5980762113533165) circle (.8pt);
\draw [fill=black] (3.5,0.8660254037844388) circle (.8pt);
\draw [fill=black] (3.,0.) circle (.8pt);
\draw [fill=black] (1.,0.) circle (.8pt);
\draw [fill=black] (1.5,0.8660254037844388) circle (.8pt);
\draw [fill=black] (2.,1.7320508075688776) circle (.8pt);
\draw [fill=black] (2.5,0.8660254037844388) circle (.8pt);
\end{scriptsize}
\node at (.5,2.4) {$G_2$};
\begin{small}
\draw[color=black] (-0.32,-0.03) node {$d$};
\draw[color=black] (3.9,-0.5) node {$e$};
\draw[color=black] (1.96,3.79) node {$a$};
%\draw[color=black] (0.74,1.99) node {$D$};
%\draw[color=black] (3.24,1.85) node {$E$};
%\draw[color=black] (1.98,-0.25) node {$F$};
\end{small}

\end{tikzpicture}

\vspace{-3.47cm}

\hspace*{3.15cm}
\begin{tikzpicture}[line cap=round,line join=round,>=triangle 45,x=.7cm,y=.7cm]
\draw [line width=1.6pt] (2.,3.4641016151377553)-- (0.,0.);
\draw [line width=1.6pt] (0.,0.)-- (4.,0.);
\draw [line width=1.6pt] (4.,0.)-- (2.,3.4641016151377553);
\draw [line width=1.6pt] (1.,1.7320508075688776)-- (3.,1.7320508075688776);
\draw [line width=1.6pt] (3.,1.7320508075688776)-- (2.,0.);
\draw [line width=1.6pt] (2.,0.)-- (1.,1.7320508075688776);
\draw [line width=1.6pt] (1.5,2.5980762113533165)-- (2.5,2.5980762113533165);
\draw [line width=1.6pt] (2.5,2.5980762113533165)-- (2.,1.7320508075688776);
\draw [line width=1.6pt] (2.,1.7320508075688776)-- (1.5,2.5980762113533165);
\draw [line width=1.6pt] (0.5,0.8660254037844388)-- (1.5,0.8660254037844388);
\draw [line width=1.6pt] (1.5,0.8660254037844388)-- (1.,0.);
\draw [line width=1.6pt] (1.,0.)-- (0.5,0.8660254037844388);
\draw [line width=1.6pt] (2.5,0.8660254037844388)-- (3.5,0.8660254037844388);
\draw [line width=1.6pt] (3.5,0.8660254037844388)-- (3.,0.);
\draw [line width=1.6pt] (3.,0.)-- (2.5,0.8660254037844388);
\begin{scriptsize}
\draw [fill=black] (0.,0.) circle (.8pt);
\draw [fill=black] (4.,0.) circle (.8pt);
\draw [fill=black] (2.,3.4641016151377553) circle (.8pt);
\draw [fill=black] (1.,1.7320508075688776) circle (.8pt);
\draw [fill=black] (3.,1.7320508075688776) circle (.8pt);
\draw [fill=black] (2.,0.) circle (.8pt);
\draw [fill=black] (0.5,0.8660254037844388) circle (.8pt);
\draw [fill=black] (1.5,2.5980762113533165) circle (.8pt);
\draw [fill=black] (2.5,2.5980762113533165) circle (.8pt);
\draw [fill=black] (3.5,0.8660254037844388) circle (.8pt);
\draw [fill=black] (3.,0.) circle (.8pt);
\draw [fill=black] (1.,0.) circle (.8pt);
\draw [fill=black] (1.5,0.8660254037844388) circle (.8pt);
\draw [fill=black] (2.,1.7320508075688776) circle (.8pt);
\draw [fill=black] (2.5,0.8660254037844388) circle (.8pt);
\end{scriptsize}
\begin{small}
%\draw[color=black] (-0.32,-0.03) node {$a$};
\draw[color=black] (4.26,-0.07) node {$f$};
\draw[color=black] (1.96,3.79) node {$b$};
%\draw[color=black] (0.74,1.99) node {$D$};
%\draw[color=black] (3.24,1.85) node {$E$};
%\draw[color=black] (1.98,-0.25) node {$F$};
\end{small}

\end{tikzpicture}
\end{minipage}
\end{center}
    \caption{A 2-separation of $S_n$}
    \label{fig:Sn_sep}
\end{figure}
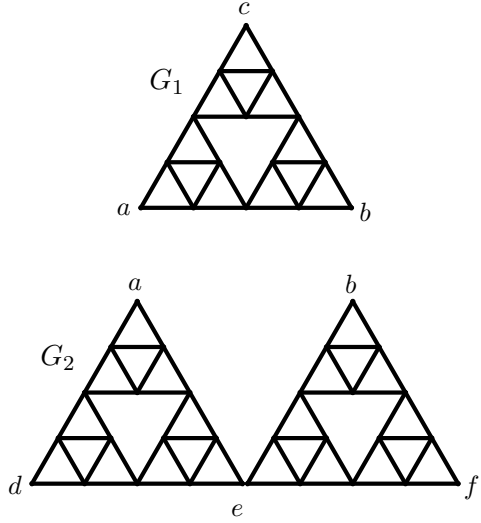

It is straightforward to verify that $r_0(a,b) = 2/3$. 
Now, we assume that $r_n(a,b) = x$, and use the separation shown in Figure \ref{fig:Sn_sep} together with Theorem \ref{thm:sameside} to show that 
\[\begin{array}{c}
\displaystyle{
r_{n+1}(d,f) = r_{G_2}(d,f) - \frac{(r_{G_2}(d,a)+r_{G_2}(f,b) - r_{G_2}(d,b) - r_{G_2}(f,a))^2}{4(r_{G_1}(a,b) + r_{G_2}(a,b))}
}\\[5mm]
\displaystyle{=2x - \frac{(x + x - 2x - 2x)^2}{4(x + 2x)} }\\[5mm] 
\displaystyle{=2x - \frac{4x^2}{4(3x)} = 2x - \frac{1}{3}x = \frac{5}{3}x
}.
\end{array}
\]
Thus, $r_n(a,b) = \frac{2}{3}\left(\frac{5}{3}\right)^n$, as desired. 

This result is particularly surprising, as it was shown in \cite{Sierp} that the number of spanning trees in $S_n$ is 
\begin{equation}\label{eq:STinSn}
2^{\frac{1}{2}(3^n-1)}3^{\frac{1}{4}(3^{n+1}+2n+1)}5^{\frac{1}{4}(3^n-2n-1)},
\end{equation}
which would seem to suggest that formulae for the resistance distances in $S_n$ would be at least as complicated. In fact, Equation \ref{eq:STinSn} together with Theorem \ref{thm:2forests/trees} give the number of spanning 2-forests which separate two degree-2 vertices in $S_n$ as
\[
\mathcal{F}_{S_n}(a,b) = 2^{\frac{1}{2}(3^n+1)}3^{\frac{1}{4}(3^{n+1}-2n-3)}5^{\frac{1}{4}(3^n+2n-1)} .
\]

We also point out that $r_n(a,b) \to \infty$ as $n \to \infty$.

\subsection{Resistance distance in a bent linear 2-tree}

Earlier work by the authors considered resistance distance in a straight linear 2-tree with $n$ vertices and obtained the following result.
\begin{theorem}\label{thm:sl2t}
Let $H_n$ be the straight linear 2-tree on $n$ vertices labeled as in Figure~\ref{fig:2tree}, and let $m = n-2$ be the number of triangles in $H_n$.  Then for any two vertices $j$ and $j+k$ of $H_n$, 
{\begin{equation}
\begin{aligned}  r_{H_n}(j,j+k) &= \frac{F_{m+1}^2+F_k^2F_{m-2j-k+3}^2}{F_{2m+2}} \label{eq:rdsl2t} \\&+\frac{\frac{F_{m+1}}{5}\left[F_{m-k}(kL_k-F_k)+F_{m-k+1}\left((k-5)F_{k+1}+(2k+2)F_{k}\right)\right]}{F_{2m+2}}   \end{aligned}
 \end{equation}}
%or in closed form 
%	 \[r_m(j,j+k)=\frac{F_{m+1}^2+F_k^2F_{m-2j-k+3}^2+F_{m+1}\left(F_{m-k}(kL_k-F_k)+F_{m-k+1}\left((k-5)F_{k+1}+(2k+2)F_{k}\right)\right)/5}{F_{2m+2}}\]
\noindent where $F_p$ is the $p$th Fibonacci number and $L_k$ is the $k$th Lucas number. 
\end{theorem}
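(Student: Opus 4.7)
The plan is to proceed by induction on $n$, using the 2-separator machinery of Section 2 as the central engine. First, I would establish the tree count $T(H_n) = F_{2n-2}$. Applying Theorem \ref{2sep-tree} to the 2-separator $\{n-2, n-1\}$, with $G_1 = H_{n-1}$ (retaining the edge $\{n-2,n-1\}$) and $G_2$ the length-2 path on $\{n-2, n, n-1\}$, yields the recurrence
\[
T(H_n) = 2\,T(H_{n-1}) + \F_{H_{n-1}}(n-2, n-1).
\]
A parallel induction shows $\F_{H_{n-1}}(n-2, n-1) = F_{2n-5}$, and the target equality $F_{2n-2} = 2F_{2n-4} + F_{2n-5}$ is a routine Fibonacci identity.

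For the main formula, fix $j$ and $k$ with $1 \leq j$ and $j+k \leq n$, and treat the case $j+k \leq n-1$ first, in which both endpoints lie in $G_1 = H_{n-1}$. Theorem \ref{thm:sameside} then gives
\begin{align*}
r_{H_n}(j, j+k) &= r_{H_{n-1}}(j, j+k) \\
&\quad - \frac{\bigl[r_{H_{n-1}}(j, n-2) + r_{H_{n-1}}(j+k, n-1) - r_{H_{n-1}}(j, n-1) - r_{H_{n-1}}(j+k, n-2)\bigr]^2}{4\bigl[r_{H_{n-1}}(n-2, n-1) + 2\bigr]},
\end{align*}
using $r_{G_2}(n-2, n-1) = 2$. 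By the inductive hypothesis every resistance on the right is an explicit Fibonacci/Lucas expression. The boundary cases $j+k \in \{n-1, n\}$ are handled by the reflection automorphism $i \mapsto n+1-i$, which preserves $H_n$ and maps such configurations to ones with both endpoints safely inside $H_{n-1}$.

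The main obstacle is the algebraic verification that, after substitution, the resulting rational expression in Fibonacci and Lucas numbers collapses to the right-hand side of \eqref{eq:rdsl2t}. A direct simplification would require systematic use of Cassini's identity $F_{n+1}F_{n-1} - F_n^2 = (-1)^n$, the product rule $F_n L_n = F_{2n}$, and the addition formula $F_{m+k} = F_{m+1}F_k + F_m F_{k-1}$. A cleaner route is to fix $j$ and $k$ and observe that both sides of \eqref{eq:rdsl2t}, as functions of $m$, satisfy the same second-order linear recurrence inherited from the Fibonacci recurrence; matching them then reduces to checking two consecutive base cases in $m$, a finite computation.
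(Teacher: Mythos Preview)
The paper does not prove Theorem~\ref{thm:sl2t}; it is quoted from the authors' earlier work~\cite{bef} and then used as input for the bent 2-tree computations in Section~\ref{sec:applications}. So there is no proof here to compare against, and I can only assess your argument on its own terms.

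The inductive framework via the 2-separator $\{n-2,n-1\}$ and Theorem~\ref{thm:sameside} is correctly set up, and the tree-count argument is fine. There are two gaps. The minor one is that the reflection $i\mapsto n+1-i$ does not dispose of the extremal pair $(j,j+k)=(1,n)$, which is fixed by that symmetry; for that case you would need the opposite-side formula (Theorem~\ref{thm:formulation1} or Corollary~\ref{cor:16}) or a different separator. The substantive gap is the proposed shortcut for the algebra. The recurrence furnished by Theorem~\ref{thm:sameside} is \emph{nonlinear} in the resistance distances (a squared numerator over a sum of resistances), so nothing in your setup shows that $r_{H_n}(j,j+k)$, as a function of $m=n-2$, satisfies any linear recurrence with constant coefficients. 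Even on the closed-form side, the right-hand side of \eqref{eq:rdsl2t} involves $F_{m+1}^2/F_{2m+2}$ and $F_{m-2j-k+3}^2/F_{2m+2}$; Fibonacci squares already obey a third-order, not second-order, recurrence before one divides by $F_{2m+2}$. The ``match two base cases'' plan therefore rests on an unproven and, as stated, incorrect structural claim. What the induction actually requires is verifying that the explicit Fibonacci expression on the right of \eqref{eq:rdsl2t} satisfies the nonlinear update coming from Theorem~\ref{thm:sameside}; that is precisely the computation you flagged as the main obstacle, and it is not bypassed.
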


Modifying this formula by replacing $j$ by $u$, $j+k$ by $v$, $m$ by $n-2$, then multiplying by $F_{2n-2}$ and applying 
Theorem~\ref{thm:2forests/trees}  we have

\begin{equation}\label{eq:rdsl2tuv}\begin{aligned}\F_{H_n}(u,v)&= F_{n-1}^2+F_{v-u}^2F_{n-u-v+1}^2\\ 
	 &\quad + \frac{F_{n-1}}5 [F_{n+u-v-2}((v-u)L_{v-u}-F_{v-u}) \\
	 &\quad + F_{n+u-v-1}((v-u-5)F_{v-u+1}+2(v-u+1)F_{v-u}]. \end{aligned}\end{equation}    
%\begin{multline}
%\F_{H_n}(u,v)= F_{n-1}^2+F_{v-u}^2F_{n-u-v+1}^2\\ + \frac{F_{n-1}}5 [F_{n+u-v-2}((v-u)L_{v-u}-F_{v-u}) + F_{n+u-v-1}((v-u-5)F_{v-u+1}+2(v-u+1)F_{v-u}]
%\end{multline}

In ~\cite{bef} we also gave an alternative formula for $r_{H_n}(j,j+k)$.  Modifying it in the same way yields

\begin{equation}
\F_{H_n}(u,v)=\sum_{i=1}^{v-u} (F_i F_{i+2u-2}-F_{i-1} F_{i+2u-3})F_{2n-2i-2u+1}. \label{eq:resdiststraightsum}
\end{equation}
Although it is not a closed form expression, is is nevertheless useful as we shall see.

In this work we consider a modification of the straight linear 2-tree which we term the bent linear 2-tree whose definition is below.

\begin{definition}[bent linear 2-tree]\label{def:lin2treestb}
We define the graph $G_n$ with $V(G_n)= V(H_n)$ and $E(G_n) = (E(H_n) \cup \{k,k+3\})\setminus(\{k+1,k+3\})$ to be a bent linear 2-tree with bend at vertex $k$.
 See Figure~\ref{fig:bent}.
\end{definition}
In essence a bent linear 2 tree differs from a straight linear 2 tree in that vertex $k$ has degree 5, vertex $k+1$ has degree $3$ and all other vertices have degrees as before.

\begin{figure}\label{fig:bent2tree}
\begin{center}
\begin{tikzpicture}[line cap=round,line join=round,>=triangle 45,x=1.0cm,y=1.0cm,scale = 1.2]
\draw [line width=.8pt] (-5.464101615137757,2.)-- (-4.598076211353318,1.5);
\draw [line width=.8pt] (-4.598076211353318,1.5)-- (-4.598076211353318,2.5);
\draw [line width=.8pt] (-4.598076211353318,2.5)-- (-3.732050807568879,2.);
\draw [line width=.8pt] (-3.732050807568879,2.)-- (-3.7320508075688785,3.);
\draw [line width=.8pt,dotted] (-3.7320508075688785,3.)-- (-2.8660254037844393,2.5);
\draw [line width=.8pt] (-2.8660254037844393,2.5)-- (-2.866025403784439,3.5);
\draw [line width=.8pt] (-2.866025403784439,3.5)-- (-2.,3.);
\draw [line width=.8pt] (-2.,3.)-- (-2.,4.);
\draw [line width=.8pt] (-2.,4.)-- (-1.1339745962155612,3.5);
\draw [line width=.8pt] (-1.1339745962155612,3.5)-- (-2.,3.);
\draw [line width=.8pt] (-2.,3.)-- (-1.1339745962155616,2.5);
\draw [line width=.8pt] (-1.1339745962155616,2.5)-- (-1.1339745962155612,3.5);
\draw [line width=.8pt] (-1.1339745962155612,3.5)-- (-0.2679491924311225,3.);
\draw [line width=.8pt] (-0.2679491924311225,3.)-- (-1.1339745962155616,2.5);
\draw [line width=.8pt,dotted] (-1.1339745962155616,2.5)-- (-0.26794919243112303,2.);
\draw [line width=.8pt,dotted] (-0.26794919243112303,2.)-- (-0.2679491924311225,3.);
\draw [line width=.8pt,dotted] (-0.2679491924311225,3.)-- (0.5980762113533165,2.5);
\draw [line width=.8pt] (0.5980762113533165,2.5)-- (-0.26794919243112303,2.);
\draw [line width=.8pt] (-0.26794919243112303,2.)-- (0.5980762113533152,1.5);
\draw [line width=.8pt] (0.5980762113533152,1.5)-- (0.5980762113533165,2.5);
\draw [line width=.8pt] (0.5980762113533165,2.5)-- (1.464101615137755,2.);
\draw [line width=.8pt] (1.464101615137755,2.)-- (0.5980762113533152,1.5);
\draw [line width=.8pt] (-2.,4.)-- (-2.866025403784439,3.5);
\draw [line width=.8pt,dotted] (-2.866025403784439,3.5)-- (-3.7320508075688785,3.);
\draw [line width=.8pt] (-3.7320508075688785,3.)-- (-4.598076211353318,2.5);
\draw [line width=.8pt] (-4.598076211353318,2.5)-- (-5.464101615137757,2.);
\draw [line width=.8pt] (-4.598076211353318,1.5)-- (-3.732050807568879,2.);
\draw [line width=.8pt,dotted] (-3.732050807568879,2.)-- (-2.8660254037844393,2.5);
\draw [line width=.8pt] (-2.8660254037844393,2.5)-- (-2.,3.);
\begin{scriptsize}
\draw [fill=black] (-2.,4.) circle (1.5pt);
\draw[color=black] (-2.06648828953202,4.259331085745072) node {$k+1$};
\draw [fill=black] (-1.1339745962155612,3.5) circle (1.5pt);
\draw[color=black] (-0.9407359844212153,3.7428094398707032) node {$k+2$};
\draw [fill=black] (-2.,3.) circle (1.5pt);
\draw[color=black] (-2.0267558552339913,2.6989231016718021) node {$k$};
\draw [fill=black] (-2.866025403784439,3.5) circle (1.5pt);
\draw[color=black] (-3.0597991469827295,3.689832860806665) node {$k-1$};
\draw [fill=black] (-2.8660254037844393,2.5) circle (1.5pt);
\draw[color=black] (-2.702207238300474,2.2919066737377372) node {$k-2$};
\draw [fill=black] (-1.1339745962155616,2.5) circle (1.5pt);
\draw[color=black] (-1.3248161826354898,2.2919066737377372) node {$k+3$};
\draw [fill=black] (-2.,3.) circle (1.5pt);
\draw [fill=black] (-0.2679491924311225,3.) circle (1.5pt);
\draw[color=black] (-0.14608729846064736,3.213043649230324) node {$k+4$};
\draw [fill=black] (-3.7320508075688785,3.) circle (1.5pt);
\draw[color=black] (-3.9339127015393545,3.2395319387623434) node {$5$};
\draw [fill=black] (-3.732050807568879,2.) circle (1.5pt);
\draw[color=black] (-3.6028090823891175,1.8488967383313488) node {$4$};
\draw [fill=black] (-0.26794919243112303,2.) circle (1.5pt);
\draw[color=black] (-0.4374584833128556,1.7488967383313488) node {$n-3$};
\draw [fill=black] (-4.598076211353318,2.5) circle (1.5pt);
\draw[color=black] (-4.78153796656396,2.7494985824199927) node {$3$};
\draw [fill=black] (-4.598076211353318,1.5) circle (1.5pt);
\draw[color=black] (-4.463678492179733,1.345619237222989) node {$2$};
\draw [fill=black] (-5.464101615137757,2.) circle (1.5pt);
\draw[color=black] (-5.655651521120585,2.1270237784175476) node {$1$};
\draw [fill=black] (0.5980762113533165,2.5) circle (1.5pt);
\draw[color=black] (0.7280262560959774,2.709766148121964) node {$n-2$};
\draw [fill=black] (0.5980762113533152,1.5) circle (1.5pt);
\draw[color=black] (0.4234109264777597,1.2721075267550078) node {$n-1$};
\draw [fill=black] (1.464101615137755,2.) circle (1.5pt);
\draw[color=black] (1.628628100184621,2.0475589098214906) node {$n$};
\end{scriptsize}
\end{tikzpicture}
\end{center}
\caption{A linear 2-tree with $n$ vertices and single bend at vertex $k$.}
\label{fig:bent}
\end{figure}
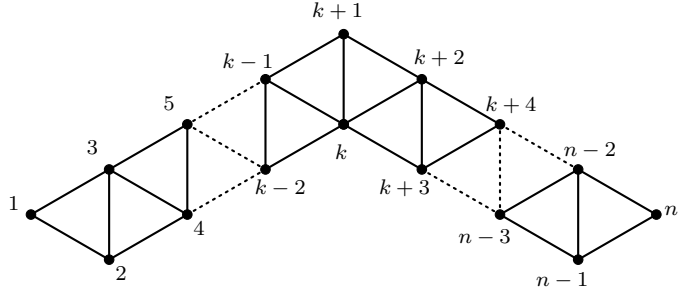

When $u$ and $v$ are on the same side of the bent linear 2-tree it is easy to determine $\F_{G_n}(u,v)$.  Applying Theorem~\ref{thm:2sep2switch} we obtain:

\begin{theorem} 
Let $G_n$ be the bent linear 2-tree on $n$ vertices labeled as in Figure \ref{fig:bent} with a bend located at vertex $k$ and let $H_n$ be the straight linear 2-tree on $n$ vertices labeled as in Figure~\ref{fig:2tree}.  Then for any two vertices $u$ and $v$ of $G_n$, where $u < v \leq  k+1$ or $k+1<u< v \leq n$

\[ \F_{G_n}(u,v)=\F_{H_n}(u,v)  {\ \rm and\ } r_{G_n}(u,v)=r_{H_n}(u,v), \]
where $F_{H_n}(u,v)$ is given by Equation (\ref{eq:rdsl2tuv}).             
							    
\end{theorem}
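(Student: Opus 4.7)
The plan is to realize $G_n$ as a 2-switch of $H_n$ and then invoke Theorem \ref{thm:2sep2switch} directly. Specifically, I aim to show that $\{k,k+1\}$ is a 2-separator of $H_n$ and that performing the 2-switch at this pair produces exactly the graph $G_n$ of Definition \ref{def:lin2treestb}.

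First I would decompose $H_n = G_1 \cup G_2$, where $V(G_1) = \{1,2,\ldots,k+1\}$ and $V(G_2)=\{k,k+1,\ldots,n\}$, assigning the edge $\{k,k+1\}$ to $G_1$ so that $E(G_1)\cap E(G_2)=\emptyset$. Because every induced triangle of $H_n$ sits on three consecutive vertices, deleting $k$ and $k+1$ separates $\{1,\ldots,k-1\}$ from $\{k+2,\ldots,n\}$, so $\{k,k+1\}$ is indeed a 2-separator with the stated 2-separation. A quick check of neighborhoods shows that inside $G_2$ vertex $k$ has neighbor set $\{k+2\}$ while vertex $k+1$ has neighbor set $\{k+2,k+3\}$.

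Next I would carry out the 2-switch at $\{k,k+1\}$, swapping the identifications of $k$ and $k+1$ between $G_1$ and $G_2$ while leaving all other labels untouched. Keeping the labeling of $G_1$, in the resulting graph vertex $k$ inherits $k+1$'s old $G_2$-neighbors $\{k+2,k+3\}$ together with its $G_1$-neighbors $\{k-2,k-1,k+1\}$, and vertex $k+1$ inherits $k$'s old $G_2$-neighbor $\{k+2\}$ together with its $G_1$-neighbors $\{k-1,k\}$; every other edge is unchanged. Comparing with Definition \ref{def:lin2treestb}, this means the edge $\{k+1,k+3\}$ of $H_n$ has been removed and the edge $\{k,k+3\}$ added, which is exactly the edge set of $G_n$.

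Once $G_n$ is identified as the 2-switch of $H_n$ at $\{k,k+1\}$, the hypothesis $u<v\le k+1$ places both vertices in $V(G_1)$, and the hypothesis $k+1<u<v\le n$ places both in $V(G_2)$, so Theorem \ref{thm:2sep2switch} yields $\F_{G_n}(u,v)=\F_{H_n}(u,v)$ and $r_{G_n}(u,v)=r_{H_n}(u,v)$ at once, and the explicit value of $\F_{H_n}(u,v)$ is then supplied by \eqref{eq:rdsl2tuv}. The substantive step is the identification of $G_n$ with the 2-switched graph; I expect this to be the only place requiring real attention, and it amounts to careful tracking of which neighbors get transferred by the swap rather than any deeper difficulty.
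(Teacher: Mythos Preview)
Your proposal is correct and follows exactly the paper's approach: the paper simply states ``Applying Theorem~\ref{thm:2sep2switch} we obtain'' the result, and in the proof of the subsequent Theorem~\ref{thm:bigone} explicitly notes that $G_n$ is obtained from $H_n$ by a 2-switch on $\{k,k+1\}$. Your write-up supplies the neighborhood bookkeeping that the paper leaves implicit, but the argument is the same.
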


\noindent For all other $u$ and $v$ in $V(G_n)$ the number of spanning 2-trees and resistance distances are determined as follows.

\begin{theorem}\label{thm:bigone} Let $G_n$ be a bent linear 2-tree on $n$ vertices with a single bend at vertex $k$.  Then for any two vertices $u$ and $v$ with $u \le k+1$ and $v>k+1$, 
\footnotesize
\[
\F_{G_n}(u,v) = \F_{H_n}(u,v) -  [F_{k-2}F_{k+1}+2(-1)^{k-u} F_{u-1}^2] \, [F_{n-k-2}F_{n-k+1} + 2 (-1)^{v-k-1}F_{n-v}^2],
\]
\normalsize
while the resistance distance between $u$ and $v$ is given by
\footnotesize 
\[ r_{G_n} (u,v)= r_{H_n}(u,v) -  [F_{k-2}F_{k+1}+2(-1)^{k-u} F_{u-1}^2] \, [F_{n-k-2}F_{n-k+1} + 2 (-1)^{v-k-1}F_{n-v}^2]/F_{2n-2}. \]
 \normalsize
\end{theorem}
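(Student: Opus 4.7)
The plan is to apply Corollary~\ref{cor:16} (the expanded form of Theorem~\ref{thm:formulation1}) to $G_n$ at the 2-separator $\{k,k+2\}$ (which separates $G_n$ because the bend confines vertex $k{+}1$'s only non-separator neighbor to the left) and in parallel to $H_n$ at the 2-separator $\{k{+}1,k{+}2\}$. Assigning the edge between the two separator vertices to the left piece in each case, both decompositions produce $G_1 = H_{k+2}$ and $G_2 = H_{n-k}^{-}$ (the graph $H_{n-k}$ with the single edge $\{1,2\}$ removed), and the vertex $v$ maps to $v-k$ in the relabeled $G_2$ in both cases. Consequently, every term in Corollary~\ref{cor:16} that depends only on $G_2$, on $v$, or on $T(G_1)$ agrees in the two expansions and cancels under subtraction, as does the contribution proportional to $\F_{H_{k+2}}(k,k+2)-\F_{H_{k+2}}(k+1,k+2)$, which vanishes by reflection symmetry of $H_{k+2}$ together with the identity $F_m^2-F_{m-1}^2=F_{m-2}F_{m+1}$. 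What remains is
\begin{equation*}
\F_{G_n}(u,v)-\F_{H_n}(u,v) \;=\; \Delta_1\,T(H_{n-k}^{-})\;+\;\tfrac{1}{2}\,\Delta\,\bigl(B-A+D\bigr),
\end{equation*}
where $\Delta := \F_{H_{k+2}}(u,k)-\F_{H_{k+2}}(u,k+1)$, and $\Delta_1 := \F_{H_{k+1}}(u,k)-\F_{H_{k+1}}(u,k+1)$ emerges from the two contracted-graph terms $\F_{G_1/ij}(u,ij)$ once one observes that both $H_{k+2}/\{k,k+2\}$ and $H_{k+2}/\{k+1,k+2\}$ are isomorphic to $H_{k+1}$ with a doubled edge $\{k,k+1\}$ (the identified vertex being $k$ in the first case and $k+1$ in the second), and one further edge deletion-contraction on the extra edge copy reduces the difference to $\Delta_1$; the quantities $A,B,D$ are the corresponding $\F$-values on $H_{n-k}^{-}$.

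Next I would evaluate every piece in closed Fibonacci form. Lemma~\ref{tree2forest} together with a single edge deletion-contraction on $\{1,2\}$ in $H_{n-k}$ gives $T(H_{n-k}^-)=F_{2(n-k)-4}$, $D=F_{2(n-k)-3}$, and $B-A=\F_{H_{n-k}}(v-k,2)-\F_{H_{n-k}}(v-k,1)$. Direct expansion of \eqref{eq:rdsl2tuv} yields the twin Fibonacci identities
\begin{equation*}
\Delta \;=\; -2L, \qquad \Delta_1 \;=\; -L, \qquad L := F_{k-2}F_{k+1}+2(-1)^{k-u}F_{u-1}^2,
\end{equation*}
and applying the $\Delta_1$-identity under the reflection $i\leftrightarrow (n-k{+}1)-i$ on the right side gives $B-A = -\bigl(F_{n-k-3}F_{n-k}+2(-1)^{v-k}F_{n-v}^2\bigr)$. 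Combining these via $F_{2(n-k)-4}+F_{2(n-k)-3}=F_{2(n-k)-2}$ and the Fibonacci identity $F_{N-2}F_{N+1}+F_{N-3}F_N=F_{N-1}L_{N-1}=F_{2N-2}$ (applied with $N=n-k$) yields $T(H_{n-k}^{-})+D+(B-A) = R$, where $R := F_{n-k-2}F_{n-k+1}+2(-1)^{v-k-1}F_{n-v}^2$ is the right bracket in the theorem. Substituting $\Delta_1=-L$ and $\Delta=-2L$ then gives $\F_{G_n}(u,v)-\F_{H_n}(u,v) = -LR$, as claimed.

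The main obstacle is the pair of sibling Fibonacci identities $\Delta=-2L$ and $\Delta_1=-L$: they do not reduce to a single classical identity but require expanding \eqref{eq:rdsl2tuv} at both $v=k$ and $v=k+1$ (in sizes $n=k+2$ and $n=k+1$), subtracting, and simplifying via repeated application of Cassini ($F_{m-1}F_{m+1}-F_m^2=(-1)^m$), Catalan ($F_m^2-F_{m-r}F_{m+r}=(-1)^{m-r}F_r^2$), the sum-of-squares identity $F_m^2+F_{m-1}^2=F_{2m-1}$, and $F_{2m}=F_mL_m$, with careful bookkeeping of the parity factor $(-1)^{k-u}$. Once the spanning 2-forest identity is established, the resistance-distance formula follows immediately: Theorem~\ref{2sep-tree} applied to both 2-separations produces the same Fibonacci combination for $T(G_n)$ and for $T(H_n)$, which equals $F_{2n-2}$ by Theorem~\ref{thm:sl2t}, so dividing through by $F_{2n-2}$ and invoking Theorem~\ref{thm:2forests/trees} converts the forest identity into the claimed resistance-distance identity.
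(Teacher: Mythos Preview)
Your argument is correct, and every intermediate claim checks out (including the identification of the contracted graphs with $H_{k+1}$ plus a doubled edge, the vanishing of $\F_{H_{k+2}}(k,k+2)-\F_{H_{k+2}}(k+1,k+2)$, the values of $T(H_{n-k}^-)$ and $D$, and the final Fibonacci combination $F_{N-2}F_{N+1}+F_{N-3}F_N=F_{2N-2}$). However, the route is genuinely different from the paper's.

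The paper uses the \emph{same} 2-separator $\{k,k+1\}$ in both $G_n$ and $H_n$, exploiting that $G_n$ is obtained from $H_n$ by a 2-switch there. With that choice one has $G_1=H_1$ literally, and $G_2\cong H_2$ with the labels $k$ and $k+1$ swapped; applying Theorem~\ref{thm:formulation1} (not Corollary~\ref{cor:16}) to both graphs, the contracted-graph terms, the tree counts, and the $\F(\cdot,\{i,j\})$ terms cancel \emph{trivially}, leaving directly
\[
\F_{G_n}(u,v)-\F_{H_n}(u,v)=\bigl[\F_{H_{k+1}}(u,k+1)-\F_{H_{k+1}}(u,k)\bigr]\bigl[\F_{H_2}(v,k+1)-\F_{H_2}(v,k)\bigr],
\]
already a product of two brackets. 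Each bracket is then evaluated with the telescoping sum formula~\eqref{eq:resdiststraightsum} and one application of Catalan's identity, which is considerably lighter than expanding~\eqref{eq:rdsl2tuv}.

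Your choice of two different separators so that the \emph{pieces} $G_1,G_2$ coincide is elegant in its own way, but it shifts the asymmetry into $G_1$: you must separately verify $\F_{H_{k+2}}(k,k+2)=\F_{H_{k+2}}(k+1,k+2)$, analyze two distinct contractions of $H_{k+2}$, and establish the extra identity $\Delta=-2L$ on top of $\Delta_1=-L$. For the last of these there is a shortcut you did not mention: applying Theorem~\ref{2sep-forest-same-side} to $H_{k+2}$ at the separator $\{k,k+1\}$ (with the outer piece a bare path on three vertices) gives $\F_{H_{k+2}}(u,w)=2\F_{H_{k+1}}(u,w)+\F_{H_{k+1}/\{k,k+1\}}(u,w)$ for $w\in\{k,k+1\}$, and since $k$ and $k+1$ coincide in the contracted graph, $\Delta=2\Delta_1$ follows immediately. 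With that observation your proof needs only the single identity $\Delta_1=-L$, matching the paper's workload.
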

 
\begin{proof}
Let $H$ be the straight linear 2-tree with $n$ vertices and denote $G_n$ by $G$.  We note that both $G$ and $H$ are graphs with a 2-separation $\{k,k+1\}$.  Moreover $G$ is obtained by performing a 2-switch on the vertices $k$ and $k+1$ in $H$.  

From Theorem~\ref{thm:formulation1} we observe that
\begin{align*}
\F_G(u,v) =& \F_{G_1/ij}(u,ij)T(G_2) +\F_{G_2/ij}(v,ij)T(G_1)\\& + \F_{G_1}(u,i)\F_{G_2}(v,j)+\F_{G_1}(u,j)\F_{G_2}(v,i) \\& - 2\F_{G_1}(u,\{i,j\})\F_{G_2}(v,\{i,j\}),
\end{align*}
where $i=k$, $j=k+1$ and $G_1$ and $G_2$ are the two graphs of the 2-separation of $G$ as shown in Figure~\ref{fig:B}.  
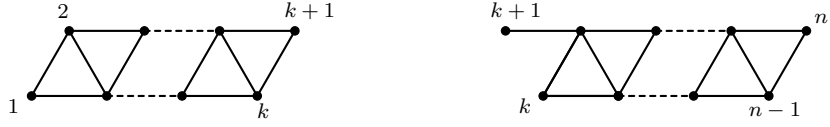
\begin{figure}[!ht]
\begin{center}

\begin{tikzpicture}[line cap=round,line join=round,>=triangle 45,x=1.0cm,y=1.0cm]
\begin{scope}

\draw [line width=.8pt] (-2.,3.)-- (-1.5,3.8660254037844393);
\draw [line width=.8pt] (-1.5,3.8660254037844393)-- (-1.,3.);
\draw [line width=.8pt] (-1.,3.)-- (-2.,3.);
\draw [line width=.8pt] (-1.5,3.8660254037844393)-- (-0.5,3.866025403784439);
\draw [line width=.8pt] (-1.,3.)-- (-0.5,3.866025403784439);
\draw [line width=.8pt,dash pattern=on 2pt off 2pt] (-1.,3.)-- (0.,3.);
\draw [line width=.8pt,dash pattern=on 2pt off 2pt] (-0.5,3.866025403784439)-- (0.5,3.8660254037844375);
\draw [line width=.8pt] (0.5,3.8660254037844375)-- (0.,3.);
\draw [line width=.8pt] (0.,3.)-- (1.,3.);
\draw [line width=.8pt] (1.,3.)-- (0.5,3.8660254037844375);
\draw [line width=.8pt] (0.5,3.8660254037844375)-- (1.5,3.8660254037844357);
\draw [line width=.8pt] (1.5,3.8660254037844357)-- (1.,3.);
\begin{scriptsize}
\draw [fill=black] (-2.,3.) circle (1.6pt);
\draw[color=black] (-2.24,2.87) node {$1$};
\draw [fill=black] (-1.,3.) circle (1.6pt);
\draw [fill=black] (-1.5,3.8660254037844393) circle (1.6pt);
\draw[color=black] (-1.58,4.13) node {$2$};
\draw [fill=black] (-0.5,3.866025403784439) circle (1.6pt);
\draw [fill=black] (0.,3.) circle (1.6pt);
\draw [fill=black] (0.5,3.8660254037844375) circle (1.6pt);
\draw [fill=black] (1.,3.) circle (1.6pt);
\draw[color=black] (1.08,2.81) node {$k$};
\draw [fill=black] (1.5,3.8660254037844357) circle (1.6pt);
\draw[color=black] (1.7,4.13) node {$k+1$};
\end{scriptsize}
\end{scope}
\begin{scope}[xshift = .45\textwidth]
\draw [line width=.8pt] (-2.,3.)-- (-1.5,3.8660254037844393);
\draw [line width=.8pt] (-1.5,3.8660254037844393)-- (-1.,3.);
\draw [line width=.8pt] (-1.,3.)-- (-2.,3.);
\draw [line width=.8pt] (-1.5,3.8660254037844393)-- (-0.5,3.866025403784439);
\draw [line width=.8pt] (-1.,3.)-- (-0.5,3.866025403784439);
\draw [line width=.8pt,dash pattern=on 2pt off 2pt] (-1.,3.)-- (0.,3.);
\draw [line width=.8pt,dash pattern=on 2pt off 2pt] (-0.5,3.866025403784439)-- (0.5,3.8660254037844375);
\draw [line width=.8pt] (0.5,3.8660254037844375)-- (0.,3.);
\draw [line width=.8pt] (0.,3.)-- (1.,3.);
\draw [line width=.8pt] (1.,3.)-- (0.5,3.8660254037844375);
\draw [line width=.8pt] (0.5,3.8660254037844375)-- (1.5,3.8660254037844357);
\draw [line width=.8pt] (1.5,3.8660254037844357)-- (1.,3.);
\draw [line width=.8pt] (-2.5,3.8660254037844397)-- (-1.5,3.8660254037844393);
\draw [line width=.8pt] (-1.5,3.8660254037844393)-- (-2.,3.);
\draw [line width=.8pt] (-2.,3.)-- (-1.,3.);
\begin{scriptsize}
\draw [fill=black] (-2.,3.) circle (1.6pt);
\draw[color=black] (-2.24,2.87) node {$k$};
\draw [fill=black] (-1.,3.) circle (1.6pt);
\draw [fill=black] (-1.5,3.8660254037844393) circle (1.6pt);
\draw [fill=black] (-0.5,3.866025403784439) circle (1.6pt);
\draw [fill=black] (0.,3.) circle (1.6pt);
\draw [fill=black] (0.5,3.8660254037844375) circle (1.6pt);
\draw [fill=black] (1.,3.) circle (1.6pt);
\draw[color=black] (1.08,2.81) node {$n-1$};
\draw [fill=black] (1.5,3.8660254037844357) circle (1.6pt);
\draw[color=black] (1.7,4.03) node {$n$};
\draw [fill=black] (-1.,3.) circle (1.6pt);
\draw [fill=black] (-2.5,3.8660254037844397) circle (1.6pt);
\draw[color=black] (-2.36,4.13) node {$k+1$};
\end{scriptsize}
\end{scope}
\end{tikzpicture}
\end{center}
\caption{The two graphs of the 2-separation of $G$.  Here $G_1$ is on the left and $G_2$ is on the right.}\label{fig:B}
\end{figure}

Similarly we have 
\begin{align*}
\F_H(u,v) =& \F_{H_1/ij}(u,ij)T(H_2) +\F_{H_2/ij}(v,ij)T(H_1)\\& + \F_{H_1}(u,i)\F_{H_2}(v,j)+\F_{H_1}(u,j)\F_{H_2}(v,i) \\& - 2\F_{H_1}(u,\{i,j\})\F_{H_2}(v,\{i,j\}),
\end{align*}
where $i=k$, $j=k+1$ and $H_1$ and $H_2$ are the two graphs of the 2-separation of $H$ as shown in Figure~\ref{fig:BH}.  
\begin{figure}[h!]
\begin{center}

\begin{tikzpicture}[line cap=round,line join=round,>=triangle 45,x=1.0cm,y=1.0cm]
\begin{scope}

\draw [line width=.8pt] (-2.,3.)-- (-1.5,3.8660254037844393);
\draw [line width=.8pt] (-1.5,3.8660254037844393)-- (-1.,3.);
\draw [line width=.8pt] (-1.,3.)-- (-2.,3.);
\draw [line width=.8pt] (-1.5,3.8660254037844393)-- (-0.5,3.866025403784439);
\draw [line width=.8pt] (-1.,3.)-- (-0.5,3.866025403784439);
\draw [line width=.8pt,dash pattern=on 2pt off 2pt] (-1.,3.)-- (0.,3.);
\draw [line width=.8pt,dash pattern=on 2pt off 2pt] (-0.5,3.866025403784439)-- (0.5,3.8660254037844375);
\draw [line width=.8pt] (0.5,3.8660254037844375)-- (0.,3.);
\draw [line width=.8pt] (0.,3.)-- (1.,3.);
\draw [line width=.8pt] (1.,3.)-- (0.5,3.8660254037844375);
\draw [line width=.8pt] (0.5,3.8660254037844375)-- (1.5,3.8660254037844357);
\draw [line width=.8pt] (1.5,3.8660254037844357)-- (1.,3.);
\begin{scriptsize}
\draw [fill=black] (-2.,3.) circle (1.6pt);
\draw[color=black] (-2.24,2.87) node {$1$};
\draw [fill=black] (-1.,3.) circle (1.6pt);
\draw [fill=black] (-1.5,3.8660254037844393) circle (1.6pt);
\draw[color=black] (-1.58,4.13) node {$2$};
\draw [fill=black] (-0.5,3.866025403784439) circle (1.6pt);
\draw [fill=black] (0.,3.) circle (1.6pt);
\draw [fill=black] (0.5,3.8660254037844375) circle (1.6pt);
\draw [fill=black] (1.,3.) circle (1.6pt);
\draw[color=black] (1.08,2.81) node {$k$};
\draw [fill=black] (1.5,3.8660254037844357) circle (1.6pt);
\draw[color=black] (1.7,4.13) node {$k+1$};
\end{scriptsize}
\end{scope}
\begin{scope}[xshift = .45\textwidth]
\draw [line width=.8pt] (-2.,3.)-- (-1.5,3.8660254037844393);
\draw [line width=.8pt] (-1.5,3.8660254037844393)-- (-1.,3.);
\draw [line width=.8pt] (-1.,3.)-- (-2.,3.);
\draw [line width=.8pt] (-1.5,3.8660254037844393)-- (-0.5,3.866025403784439);
\draw [line width=.8pt] (-1.,3.)-- (-0.5,3.866025403784439);
\draw [line width=.8pt,dash pattern=on 2pt off 2pt] (-1.,3.)-- (0.,3.);
\draw [line width=.8pt,dash pattern=on 2pt off 2pt] (-0.5,3.866025403784439)-- (0.5,3.8660254037844375);
\draw [line width=.8pt] (0.5,3.8660254037844375)-- (0.,3.);
\draw [line width=.8pt] (0.,3.)-- (1.,3.);
\draw [line width=.8pt] (1.,3.)-- (0.5,3.8660254037844375);
\draw [line width=.8pt] (0.5,3.8660254037844375)-- (1.5,3.8660254037844357);
\draw [line width=.8pt] (1.5,3.8660254037844357)-- (1.,3.);
\draw [line width=.8pt] (-2.5,3.8660254037844397)-- (-1.5,3.8660254037844393);
\draw [line width=.8pt] (-1.5,3.8660254037844393)-- (-2.,3.);
\draw [line width=.8pt] (-2.,3.)-- (-1.,3.);
\begin{scriptsize}
\draw [fill=black] (-2.,3.) circle (1.6pt);
\draw[color=black] (-2.34,2.77) node {$k+1$};
\draw [fill=black] (-1.,3.) circle (1.6pt);
\draw [fill=black] (-1.5,3.8660254037844393) circle (1.6pt);
\draw [fill=black] (-0.5,3.866025403784439) circle (1.6pt);
\draw [fill=black] (0.,3.) circle (1.6pt);
\draw [fill=black] (0.5,3.8660254037844375) circle (1.6pt);
\draw [fill=black] (1.,3.) circle (1.6pt);
\draw[color=black] (1.08,2.81) node {$n-1$};
\draw [fill=black] (1.5,3.8660254037844357) circle (1.6pt);
\draw[color=black] (1.7,4.03) node {$n$};
\draw [fill=black] (-1.,3.) circle (1.6pt);
\draw [fill=black] (-2.5,3.8660254037844397) circle (1.6pt);
\draw[color=black] (-2.36,4.13) node {$k$};
\end{scriptsize}
\end{scope}
\end{tikzpicture}
\end{center}
\caption{The two graphs of the 2-separation of $H$.  Here $H_1$ is on the left and $H_2$ is on the right.  Observe the difference between the location of nodes $k$ and $k+1$ in $H_2$ compared to $G_2$.}\label{fig:BH}
\end{figure}
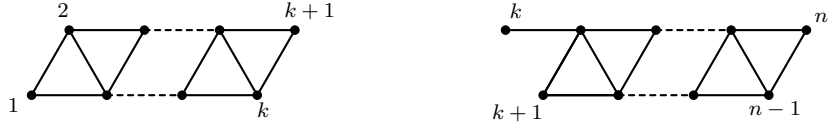

We see by definition that 
\[2\F_{G_1}(u,\{i,j\})\F_{G_2}(v,\{i,j\}) = 2\F_{H_1}(u,\{i,j\})\F_{H_2}(v,\{i,j\}).\] 
It is also clear that
\small 
\[ \F_{G_1/ij}(u,ij)T(G_2) +\F_{G_2/ij}(v,ij)T(G_1) = \F_{H_1/ij}(u,ij)T(H_2) +\F_{H_2/ij}(v,ij)T(H_1).\]
\normalsize
Hence the number of spanning 2-forests separating $u$ and $v$ in $G$ is given by 
\begin{align*}\F_G(u,v) &= \F_H(u,v)-\F_{H_1}(u,i)\F_{H_2}(v,j)-\F_{H_1}(u,j)\F_{H_2}(v,i)\\
&+\F_{G_1}(u,i)\F_{G_2}(v,j)+\F_{G_1}(u,j)\F_{G_2}(v,i).\end{align*}
Recalling that $i=k$ and $j=k+1$, we have
\begin{align*}\F_G(u,v) &= \F_H(u,v)-\F_{H_1}(u,k)\F_{H_2}(v,k+1)-\F_{H_1}(u,k+1)\F_{H_2}(v,k)\\
&+\F_{G_1}(u,k)\F_{G_2}(v,k+1)+\F_{G_1}(u,k+1)\F_{G_2}(v,k).\end{align*}
We note that 
\begin{multline*}
\F_{G_1}(u,k+1) =\F_{H_1}(u,k+1), \F_{G_1}(u,k) =\F_{H_1}(u,k),\\ 
\F_{G_2}(v,k+1) =\F_{H_2}(v,k)\ {\rm and}\  \F_{G_2}(v,k) =\F_{H_2}(v,k+1)
\end{multline*}
Making these substitutions,
\begin{align*}\F_G(u,v) &= \F_H(u,v)-\F_{H_1}(u,k)\F_{H_2}(v,k+1)-\F_{H_1}(u,k+1)\F_{H_2}(v,k)\\
&+\F_{H_1}(u,k)\F_{H_2}(v,k)+\F_{H_1}(u,k+1)\F_{H_2}(v,k+1)\\
&=\F_H(u,v) + [\F_{H_1}(u,k+1) - \F_{H_1}(u,k)] [\F_{H_2}(v,k+1)-F_{H_2}(v,k)]
\end{align*}
Recalling that $H_1$ has $k+1$ vertices, we let $n=k+1$ in equation \eqref{eq:resdiststraightsum}.  Then applying it twice, once with $v=k+1$ and once with $v=k$ we obtain
\[
\F_{H_1}(u,k+1)-\F_{H_1}(u,k) = F_{k+1-u} F_{k-1+u} - F_{k-u} F_{k-2+u}.
\]
By Catalan's identity, \smallskip

$F_{k+1-u} F_{k-1+u}= F_k^2+(-1)^{k-u}F_{u-1}^2$ and $F_{k-u} F_{k-2+u}=F_{k-1}^2+(-1)^{k-u+1}F_{u-1}^2$.\medskip

\noindent Substituting in the previous equation and simplifying, we have 
\[
\F_{H_1}(u,k+1)-\F_{H_1}(u,k)=F_{k-2}F_{k+1}+2(-1)^{k-u} F_{u-1}^2.
\]

It remains to deal with the term $\F_{H_2}(v,k+1)-F_{H_2}(v,k)$.  In order to apply equation (\ref{eq:resdiststraightsum}) we must first adjust the vertex labels on $H_2$.
Let $R$ be the graph obtained from $H_2$ by labeling the vertices from the right beginning with $1$. 

\begin{figure}[h!]
\begin{center}
\begin{tikzpicture}[line cap=round,line join=round,>=triangle 45,x=1.0cm,y=1.0cm]
\begin{scope}[xshift = .45\textwidth]
\draw [line width=.8pt] (-2.,3.)-- (-1.5,3.8660254037844393);
\draw [line width=.8pt] (-1.5,3.8660254037844393)-- (-1.,3.);
\draw [line width=.8pt] (-1.,3.)-- (-2.,3.);
\draw [line width=.8pt] (-1.5,3.8660254037844393)-- (-0.5,3.866025403784439);
\draw [line width=.8pt] (-1.,3.)-- (-0.5,3.866025403784439);
\draw [line width=.8pt,dash pattern=on 2pt off 2pt] (-1.,3.)-- (0.,3.);
\draw [line width=.8pt,dash pattern=on 2pt off 2pt] (-0.5,3.866025403784439)-- (0.5,3.8660254037844375);
\draw [line width=.8pt] (0.5,3.8660254037844375)-- (0.,3.);
\draw [line width=.8pt] (0.,3.)-- (1.,3.);
\draw [line width=.8pt] (1.,3.)-- (0.5,3.8660254037844375);
\draw [line width=.8pt] (0.5,3.8660254037844375)-- (1.5,3.8660254037844357);
\draw [line width=.8pt] (1.5,3.8660254037844357)-- (1.,3.);
\draw [line width=.8pt] (-2.5,3.8660254037844397)-- (-1.5,3.8660254037844393);
\draw [line width=.8pt] (-1.5,3.8660254037844393)-- (-2.,3.);
\draw [line width=.8pt] (-2.,3.)-- (-1.,3.);
\begin{scriptsize}
\draw [fill=black] (-2.,3.) circle (1.6pt);
\draw[color=black] (-2.34,2.77) node {$n-k$};
\draw [fill=black] (-1.,3.) circle (1.6pt);
\draw [fill=black] (-1.5,3.8660254037844393) circle (1.6pt);
\draw [fill=black] (-0.5,3.866025403784439) circle (1.6pt);
\draw [fill=black] (0.,3.) circle (1.6pt);
\draw [fill=black] (0.5,3.8660254037844375) circle (1.6pt);
\draw [fill=black] (1.,3.) circle (1.6pt);
\draw[color=black] (1.08,2.81) node {$2$};
\draw [fill=black] (1.5,3.8660254037844357) circle (1.6pt);
\draw[color=black] (1.7,4.03) node {$1$};
\draw [fill=black] (-1.,3.) circle (1.6pt);
\draw [fill=black] (-2.5,3.8660254037844397) circle (1.6pt);
\draw[color=black] (-2.36,4.13) node {$n-k+1$};
\end{scriptsize}
\end{scope}
\end{tikzpicture}
\end{center}
\caption{The graph $R$ obtained by reordering the vertices of $H_2$.}\label{fig:BH2}
\end{figure}
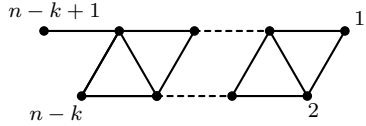

By symmetry $\F_{H_2}(v,k+1)=\F_R(n-v+1,n-k)$ and $\F_{H_2}(v,k)=\F_R(n-v+1,n-k+1)$.  Let $S$ be the graph obtained from $R$ by deleting vertex $n-k+1$.  Because $n-k-1$ is a cut vertex of $R$,
\[ \F_R(n-v+1,n-k)=\F_S(n-v+1,n-k)\]
and
\[ \F_R(n-v+1,n-k+1) = \F_S(n-v+1,n-k-1) + T(H_{n-k}).\]
Then 
\begin{multline*}
 \F_{H_2}(v,k+1)-F_{H_2}(v,k)= \F_R(n-v+1,n-k) - \F_R(n-v+1,n-k+1)\\
 =\F_S(n-v+1,n-k) - \F_S(n-v+1,n-k-1) - T(H_{n-k}).
 \end{multline*}
Applying equation (\ref{eq:resdiststraightsum}) to the $\F_S$ terms and recalling that $T(H_{n-k})=F_{2n-2k-2}$ yields
\begin{multline*} 
\F_{H_2}(v,k+1)-\F_{H_2}(v,k)=F_{v-k-1}F_{2n-v-k-1}-F_{v-k-2}F_{2n-v-k-2}-F_{2n-2k-2}\\
=F_{n-k-1}^2+(-1)^{v-k}F_{n-v}^2-F_{n-k-2}^2-(-1)^{v-k-1}F_{n-v}^2-F_{2n-2k-2}\ \ {\rm (Catalan's\ identity)}\\
=F_{n-k-3}F_{n-k}-F_{2n-2k-2}-2(-1)^{v-k-1}F_{n-v}^2\\
=-F_{n-k-2}F_{n-k+1}-2(-1)^{v-k-1}F_{n-v}^2.
\end{multline*}
This completes the proof.
\end{proof}\bigskip

\begin{corollary}Let $G_n$ be a bent linear 2-tree on $n$ vertices with a single bend at vertex $k$.  Then for any two vertices $u$ and $v$ with $u \le k$ and $v>k+1$, 
\[
\F_{G_n}(u,v) < \F_{H_n}(u,v).
\]
\end{corollary}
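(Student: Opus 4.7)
The plan is to invoke the explicit formula of Theorem~\ref{thm:bigone}, which writes $\F_{G_n}(u,v) = \F_{H_n}(u,v) - A\cdot B$ where
\begin{align*}
A &= F_{k-2}F_{k+1}+2(-1)^{k-u}F_{u-1}^2,\\
B &= F_{n-k-2}F_{n-k+1}+2(-1)^{v-k-1}F_{n-v}^2.
\end{align*}
The corollary therefore reduces immediately to proving $AB>0$, and I would establish this by showing that both $A$ and $B$ are strictly positive under the hypotheses $u\le k$ and $v>k+1$.

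For the positivity of $A$, I would split on the parity of $k-u$. If $k-u$ is even then $A=F_{k-2}F_{k+1}+2F_{u-1}^2$ is a sum of nonnegative terms and is manifestly positive. If $k-u$ is odd, the constraint $u\le k$ together with the parity obstruction $u\ne k$ forces $u\le k-1$, so Fibonacci monotonicity gives $F_{u-1}\le F_{k-2}$, and it suffices to verify $F_{k-2}F_{k+1}>2F_{k-2}^2$, i.e.\ $F_{k+1}>2F_{k-2}$. Expanding via $F_{k+1}=2F_{k-1}+F_{k-2}$ yields $F_{k+1}-2F_{k-2}=2F_{k-1}-F_{k-2}>0$, which settles this case.

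For the positivity of $B$ I would use a completely symmetric argument by substituting $m=n-k$. The hypothesis $v\ge k+2$ plays the role that $u\le k-1$ played in $A$: in the odd-parity case it gives $n-v\le m-2$, so $F_{n-v}\le F_{m-2}$, and the same Fibonacci inequality $F_{m+1}>2F_{m-2}$ delivers $B>0$; the even-parity case is automatic. Combining these two pieces gives $AB>0$, whence $\F_{G_n}(u,v)<\F_{H_n}(u,v)$.

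The technical content of the argument is almost entirely concentrated in the "odd-parity" cases, and the only real obstacle there is the Fibonacci inequality $F_{k+1}>2F_{k-2}$ (and its analogue for $m=n-k$); both are elementary consequences of the defining recurrence, so I do not expect any genuine difficulty beyond careful case bookkeeping.
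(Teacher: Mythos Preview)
Your argument is correct and is exactly the verification the paper omits: the corollary is stated without proof, as an immediate consequence of Theorem~\ref{thm:bigone}, and you have supplied the missing check that the correction term $AB$ is strictly positive. The parity split and the Fibonacci inequality $F_{k+1}-2F_{k-2}=2F_{k-1}-F_{k-2}>0$ are the natural tools, and your symmetric treatment of $B$ via $m=n-k$ is clean.

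One small point worth making explicit: in the odd-parity case your step ``$F_{k-2}F_{k+1}>2F_{k-2}^2$, i.e.\ $F_{k+1}>2F_{k-2}$'' silently divides by $F_{k-2}$, which requires $k\ge 3$; likewise the $B$-argument needs $n-k\ge 3$. Both constraints are forced by the definition of a bent linear 2-tree (the bend at $k$ requires vertices $k-2$ through $k+3$ to exist, giving $3\le k\le n-3$), so this is only a matter of stating the hypothesis, not a gap in the reasoning.
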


\begin{corollary}\label{cor:27}
 Let $G_n$ be a bent linear 2-tree with $n$ vertices and a single bend at vertex $k$.  Then 
  \[ \F_{G_n}(1,n)=\F_{H_n}(1,n)- F_{k-2}F_{k+1}F_{n-k-2}F_{n-k+1}.\]
  and
  \[ r_{G_n}(1,n)= r_{H_n}(1,n)   -\frac{F_{k-2}F_{k+1}F_{n-k-2}F_{n-k+1}}{F_{2n-2}}.\]
  \end{corollary}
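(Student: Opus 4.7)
The plan is to obtain both identities as direct specializations of Theorem~\ref{thm:bigone} with $u=1$ and $v=n$. The only substantive observation needed is that $F_{u-1}=F_0=0$ and $F_{n-v}=F_0=0$, so both of the parity-dependent correction terms $2(-1)^{k-u}F_{u-1}^2$ and $2(-1)^{v-k-1}F_{n-v}^2$ appearing in the bracketed factors of Theorem~\ref{thm:bigone} vanish simultaneously. What remains of the product of brackets is exactly $F_{k-2}F_{k+1}\cdot F_{n-k-2}F_{n-k+1}$, which gives the claimed formula for $\F_{G_n}(1,n)$.

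For the resistance distance formula, I would first invoke Theorem~\ref{thm:2sep2switch}: since $G_n$ is obtained from $H_n$ by a 2-switch at the separator $\{k,k+1\}$, we have $T(G_n)=T(H_n)$. Combining this with the earlier fact (from \cite{bef} and recalled just before Definition~\ref{def:lin2treestb}) that $T(H_n)=F_{2n-2}$, I would then divide the $\F_{G_n}(1,n)$ identity through by $F_{2n-2}$ and apply Theorem~\ref{thm:2forests/trees} termwise to convert the spanning-2-forest counts into resistance distances, obtaining the stated formula for $r_{G_n}(1,n)$.

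There is no real obstacle here; the corollary is essentially a one-line substitution, and the entire content lies in recognizing that choosing the two endpoint vertices of the bent 2-tree is precisely the substitution that kills both Catalan-type correction terms at once. I would finish by noting in passing that $F_{k-2}F_{k+1}F_{n-k-2}F_{n-k+1}>0$ for every admissible bend location (roughly $3\le k\le n-3$), so the bend strictly reduces both the end-to-end spanning-2-forest count and the end-to-end resistance distance compared with the straight linear 2-tree on the same number of vertices, confirming the qualitative claim made in the introduction.
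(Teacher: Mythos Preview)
Your proposal is correct and follows essentially the same approach as the paper: both identities are obtained by substituting $u=1$ and $v=n$ into Theorem~\ref{thm:bigone}, with the only computation being that $F_{u-1}=F_{n-v}=F_0=0$ kills the parity terms. Your derivation of the resistance formula via $T(G_n)=T(H_n)=F_{2n-2}$ is slightly more explicit than necessary, since the second displayed formula in Theorem~\ref{thm:bigone} already has the $F_{2n-2}$ denominator in place, but the argument is equivalent.
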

  \noindent The second terms tell exactly how much the number of separating 2-forests and resistance distance are diminished by a bend at $k$.

  \begin{proof}
Using Theorem~\ref{thm:bigone} and setting $u=1$ and $v=n$ yields the first equality and the second follows immediately.
\end{proof}

  \begin{remark}
  The resistance distance $r_{H_n}(1,n)$ is known from ~\cite{bef}.  Substituting gives 
  \[ r_{G_n}(1,n) = \frac{n-1}{5}+\frac{4F_{n-1}}{5L_{n-1}}-\frac{F_{k-2}F_{k+1}F_{n-k-2}F_{n-k+1}}{F_{2n-2}},\]
\noindent where $F_p$ is the $p$th Fibonacci number and $L_q$ is the $q$th Lucas number. 

	\end{remark}

\section{Conclusion}
In this paper we have given a non-trivial generalization of well-known and elementary formulae for calculating the number of spanning trees of a graph $G$ and the number of separating 2-forests  separating 2 vertices in the case that $G$ contains a cut-vertex, to the case in which $G$ contains a 2-separator.  We have applied these formulae to the Sierpinski triangle and to a family of linear 2-trees with a single bend.  For the Sierpinski triangle we observed that for any of the degree 2 vertices $a$ and $b$, the resistance distance $r_n(a,b) \to \infty$ as $n\to \infty$.  For the bent linear 2-tree we determined the resistance distance (or number of spanning 2-forests) between any pair of vertices.  We found that if two vertices are strictly on one side of a 2-separator associated with the bend, then the resistance distances in the bent 2-tree match the corresponding resistance distances in the straight linear 2-tree on the same number of vertices, while if the two vertices are on opposite sides of this 2-separator,  all the resistance distances in the bent 2-tree are less than the corresponding resistance distances in the straight linear 2-tree.   Nevertheless, it is straightforward to check that $r_{G_n}(1,n) \to \infty$ as $n \to \infty$ if $G_n$ is any member of the family of linear 2-trees with a single bend as is the case with the straight linear 2-tree~\cite{bef}.  %This corroborates our conclusion in~\cite{bef} that this family does not behave like the random geometric graphs discussed in~\cite{lostinspace}.   

These two examples illustrate the power of these 2-separation formulae, and we are confident that they can be applied to many other families of graphs in which standard circuit rules (for resistance distance) or combinatorial arguments (to count separating spanning 2-forests) are difficult, tedious, or even impossible to apply. Finding analogous formulae for 3-connected graphs is an interesting but difficult, open question.

\bibliography{references}{}
\bibliographystyle{plain} %%use halpha instead of alpha, because it does a better job with arXiv refs.

\end{document}